\documentclass[12pt]{amsart}
\usepackage{graphics}
\usepackage[toc,page]{appendix}
\usepackage{amscd,ragged2e}
\usepackage{amsmath}
\usepackage{amssymb}
\usepackage{dsfont}
\usepackage{amsmath}
\usepackage{amsthm}
\usepackage{amssymb,hyperref,dsfont,amsrefs}

\textwidth=160mm
\textheight=200mm
\topmargin=20mm
\hoffset=-20mm

\newtheorem{theorem}{Theorem}[section]
\newtheorem{definition}[theorem]{Definition}
\newtheorem{proposition}[theorem]{Proposition}
\newtheorem{corollary}[theorem]{Corollary}

\makeatletter
\newcommand\incircbin
{%
  \mathpalette\@incircbin
}
\newcommand\@incircbin[2]
{%
  \mathbin%
  {%
    \ooalign{\hidewidth$#1#2$\hidewidth\crcr$#1\bigcirc$}%
  }%
}

\makeatother

\begin{document}
\title{The Ergodic Theorem for Random Walks on Finite Quantum Groups}
    \author{J.P. McCarthy}
    \address{Department of Mathematics, Munster Technological University, Cork, Ireland.}

    \email{jeremiah.mccarthy@cit.ie}

    \date{\today}

   \keywords{random walks, finite quantum groups, ergodicity}

\subjclass[2000]{46L53 (60J05, 20G42)}

   \begin{abstract}
Necessary and sufficient conditions for a Markov chain to be ergodic are that the chain is irreducible and aperiodic. This result is manifest in the case of random walks on finite groups by a statement about the support of the driving probability: a random walk on a finite group is ergodic if and only if the support is not concentrated on a proper subgroup, nor on a coset of a proper normal subgroup. The study of random walks on finite groups extends naturally to the study of random walks on finite quantum groups, where a state on the algebra of functions plays the role of the driving probability. Necessary and sufficient conditions for ergodicity of a random walk on a finite quantum group are given on the support projection of the driving state.
\end{abstract}

\maketitle

\section*{Introduction}
Let $\sigma_1,\sigma_2,\dots,\sigma_k$ be a sequence of shuffles of a deck of cards. If the deck starts in some known order, the order of the deck after these $k$ shuffles is given by
$$\Sigma_k=\sigma_k\cdots\sigma_2\cdot \sigma_1.$$
Suppose the shuffles are random variables independently and identically distributed as $\sigma_i\sim\nu\in M_p(S_{52})$, where $M_p(S_{52})$ is the set of probability distributions on $S_{52}$, then $\Sigma_k\sim \nu^{\star k}$ where $\nu^{\star k}$ is defined inductively by
$$\nu^{\star (k+1)}(\{\sigma\})=\sum_{\varrho\in S_{52}}\nu(\{\sigma \varrho^{-1}\})\nu^{\star k}(\{\varrho\}).$$

\bigskip

This generalises to arbitrary finite groups. Given independent and identically distributed $s_i\sim \nu\in M_p(G)$,  consider the random variable:
\begin{equation}\xi_k=s_k\cdots s_2\cdot s_1\,\sim \,\nu^{\star k}.\label{rv}\end{equation}
If the convolution powers $(\nu^{\star k})_{k\geq 1}$ converge to the uniform distribution $\pi\in M_p(G)$, the random walk is said to be \emph{ergodic}.

\bigskip

The study of random walks on finite groups probably has its roots in questions of Markov \cite{Markov} and Borel (and coauthors) \cite{Borel}, who asked which card shuffles would mix up a deck of cards. This qualitative question, the inspiration for the current work, is answered by a folklore theorem, which gives conditions on the support of the \emph{driving} probability that are equivalent to ergodicity:

\begin{theorem}(Ergodic Theorem for Random Walks on Finite Groups)
Let $\nu\in M_p(G)$ be a probability on a finite group $G$. The  associated random walk is ergodic if and only if $\nu$ is not concentrated on a proper subgroup nor the coset of a proper normal subgroup $\bullet$
\end{theorem}

\bigskip

 The study of random walks on finite groups, under the programme of quantum probability, extends in a natural way to the study of random walks on  \emph{quantum} groups. Early work on quantum stochastic processes by various authors led to random walks on duals of compact groups (particularly Biane, see \cite{FG} for references), and other examples, but Franz and Gohm \cite{FG} defined with clarity a \emph{random walk on a (finite) quantum group}.

 \bigskip

The basic qualitative question: what are the conditions on the driving probability for a random walk on a finite quantum group to be ergodic; has remained open since at least 1996 when Pal \cite{Pal} showed that the ergodic theorem for random walks on finite groups does not extend to the quantum setting, that there exist random walks on finite quantum groups that are not ergodic, but neither is the driving `probability' concentrated on any proper quantum subgroup, nor does it have the periodicity associated with being concentrated on a coset of a proper normal subgroup.

\bigskip

The irreducibility condition (see Section \ref{irred}) has received a lot of attention through the study of idempotent states on quantum groups, initiated in \cite{idempotent} on compact quantum groups by Franz, Skalski (and coauthors). This programme of study, particularly \cite{idempotent}, has been cited heavily in this work. To fully adapt the study of idempotent states to irreducible random walks was to prove Proposition \ref{quassup} and Theorem \ref{irrcond}, and these are mostly concerned with introducing to the study of idempotent states the concept of support projections (see Section \ref{supports}).

\bigskip

The work leans most heavily on a paper of Fagnola and Pellicer \cite{FP}, which emerged after the intensive study of idempotent states began. The paper of Fagnola and Pellicer itself follows a paper of Evans and H{\o}egh-Krohn \cite{EVANS}. In this 2009 paper, the notions of irreducibility and periodicity of a stochastic matrix are extended to the case of a unital positive map on a finite-dimensional $\mathrm{C}^*$-algebra, and a noncommutative version of the Perron--Frobenius theorem is given. This current work puts the results of Fagnola and Pellicer in the language of finite quantum groups, and in the language of support projections. The paper of Fagnola and Pellicer is cited so heavily in this work that it will be cited once and for all at this point, with further citations of ``Fagnola and Pellicer'' referring always to \cite{FP}.

\bigskip

A number of partial results, stated for Sekine quantum groups; a sufficient condition of Zhang for aperiodicity \cite{Zhang}, and an ergodic theorem of Baraquin for central states \cite{Baraquin}, have been shown to hold more generally. As the detection of whether or not a random walk satisfies the conditions for ergodicity is non-trivial, partial results such as these are most welcome for any study of random walks on quantum groups.

\bigskip

\bigskip

The paper is organised as follows. In Section 1, finite quantum groups are defined. A number of properties of finite-dimensional $\mathrm{C}^*$-algebras, particularly concerning projections, states, support projections, and densities, are included here also. Finally, the definition of a random walk on a finite quantum group is given. Section 2 takes a brief look at the stochastic operator associated to a random walk, and crucially states the relationship between the distribution of the random walk and powers of the stochastic operator. Results of Fagnola and Pellicer concerning the spectrum of a stochastic operator are stated. In Section 3, irreducible random walks are studied, and the example of Pal discussed in more detail. The programme of study of idempotent states, and their associated group-like projections, is introduced. The definition of irreducible by Fagnola and Pellicer, in the language of harmonic projections, is shown to be equivalent to irreducible (in the sense of an irreducible random walk). Quasi-subgroups are introduced, and it is shown that a random walk concentrated on a proper quasi-subgroup is reducible, and it is shown that this is the only barrier to irreducibility. In Section 4, periodic random walks are studied. This section leans heavily on a result of Fagnola and Pellicer, which says that if an irreducible random walk is not ergodic, there exists a partition of unity that illustrates the periodicity of the walk. It is shown that these projections behave like indicator functions on cosets of proper normal subgroups, that the state defining the random walk is concentrated on one of these projections, and that one of the other projections gives a quasi-subgroup. This allows the Ergodic Theorem for Random Walks on Finite Quantum Groups to be written down. Some partial results are included in Section 5; \ref{KACP} for random walks on Kac--Paljutkin and Sekine quantum groups; \ref{Zhang} for so-called Zhang Convergence; \ref{AmaurySec} for random walks on dual groups; \ref{centralstates} for random walks given by central states.

\section{Preliminaries}

\subsection{Finite Quantum Groups\label{FQG}}
A finite quantum group $\mathbb{G}$ is a so-called \emph{virtual object}, spoken about via its algebra of functions $F(\mathbb{G})$. Here is a definition from \cite{FG}:
\begin{definition}
The \emph{algebra of functions on a finite quantum group}, is a finite-dimensional unital $\mathrm{C}^*$-Hopf algebra $A$; that is a finite-dimensional $\mathrm{C}^*$-algebra $A$ with a $*$-homomorphism $\Delta$, a counit $\varepsilon$, and an antipode $S$, such that $(A,\Delta,S,\varepsilon)$ is a Hopf $*$-algebra.
\end{definition}
Finite quantum groups are generalisations of finite groups in the sense that whenever $A$ is a commutative algebra of functions on a finite quantum group, there exists a finite classical group $G$ such that  $A\cong F(G)$, the algebra of complex-valued functions on $G$. Furthermore whenever $A_1\cong A_2$ are isomorphic as commutative algebras of functions on finite quantum groups, the finite classical groups that $A_1$ and $A_2$ define are isomorphic.  See Section 1.1 of \cite{McCarthy} for a more leisurely introduction to finite quantum groups, and \cite{VD2} for properties.

\bigskip

Denote the algebra of functions on a finite quantum group by $A=:F(\mathbb{G})$, with unit  denoted by $\mathds{1}_{\mathbb{G}}$, and refer to $\mathbb{G}$ as a finite quantum group.

\bigskip

A \emph{projection} in a $\mathrm{C}^\ast$-algebra $A$ is an element $p$ such that $p=p^{*}=p^2$. For a finite (classical) group $G$, every function $G\rightarrow \{0,1\}$ is a projection in $F(G)$. Therefore denote by $2^\mathbb{G}\subset F(\mathbb{G})$ the set of projections in the algebra of functions.

 \bigskip

As a finite-dimensional $\mathrm{C}^*$-algebra, the algebra of functions on a finite quantum group $\mathbb{G}$ is a multi-matrix algebra:
$$F(\mathbb{G})\cong \bigoplus_{i=1}^N M_{n_i}(\mathbb{C}).$$
Its left ideals are of the form $F(\mathbb{G})p$ for $p\in 2^\mathbb{G}$. The central projections  are sums of identity matrices:
\begin{equation}\label{centralproj}Z(F(\mathbb{G}))\cap 2^\mathbb{G}=\left\{\sum_{i=1}^N \alpha_{i}I_{n_i}\,:\,\alpha_i=0,1\right\}.\end{equation}
As the counit is a character, there is a one dimensional factor, whose basis element, a central projection $\eta\in 2^\mathbb{G}$, is called the \emph{Haar} element. By writing, for a general $p\in 2^\mathbb{G}$,
$$p=\alpha \eta\oplus r,$$
and considering $p^2=p$, it follows that $\alpha=0\text{ or }1$.

\subsection{States}
A probability on a classical group $\mu:\mathcal{P}(G)\rightarrow [0,1]$, gives rise to an expectation, also denoted $\mu:F(G)\rightarrow \mathbb{C}$,
$$f\mapsto \sum_{t\in G}f(t)\mu(\{t\}).$$
The expectation is a positive linear functional on $F(G)$ such that $\mu(\mathds{1}_G)=1$. Denoting the set of probabilities on $G$ by $M_p(G)$, this motivates:
\begin{definition}
A \emph{state} $\mu$ on the algebra of functions on a finite quantum group $\mathbb{G}$ is a positive linear functional such that $\mu(\mathds{1}_\mathbb{G})=1$. Denote the set of states on $F(\mathbb{G})$ by $M_p(\mathbb{G})$.
\end{definition}
Define the \emph{convolution of states} $\mu,\,\nu\in M_p(\mathbb{G})$ on a quantum group:
\begin{equation}
\mu\star \nu:=(\mu\otimes \nu)\Delta.\label{conv}\end{equation}
The counit  is a state that is an identity for this convolution:
\begin{equation}\varepsilon\star\mu=\mu=\mu\star\varepsilon \,.\,\,\qquad (\mu\in M_p(\mathbb{G}))\label{eps}\end{equation}
Where $\pi$ is the random/uniform probability on a classical group $G$, consider the state on $F(G)$:

$$f\mapsto \sum_{t\in G} f(t)\,\pi(\{t\})=\frac{1}{|G|}\sum_{t\in G}f(t).$$

This state is called the \emph{Haar} state, and it is \emph{invariant} in the sense that for all $\mu\in M_p(G)$,
\begin{equation}\pi\star \mu=\pi=\mu\star \pi.\label{invar2}\end{equation}
Still in the classical case, this invariance is equivalent to
\begin{equation} \mathds{1}_G\cdot \pi(f)=\left(I_{F(G)}\otimes \pi\right)\Delta(f)= \left(\pi\otimes I_{F(G)}\right)\Delta(f)\,.\,\,\qquad (f\in F(G))\label{invar}\end{equation}

A finite quantum group also has a unique (tracial) Haar state (\cite{VD2}, Theorem 1.3), denoted by $\int_{\mathbb{G}}$, and whose invariance can be given by either of the equivalent conditions (\ref{invar2}) or (\ref{invar}).

.

\subsubsection{The Support of a State}\label{supports}
Let $\nu\in M_p(\mathbb{G})$ be a state and consider the left ideal
$$N_\nu=\{g\in F(\mathbb{G})\,|\,\nu(|g|^2)=0\}.$$
As a left ideal of a finite-dimensional $\mathrm{C}^\ast$-algebra,  $N_\nu$ must be of the form $F(\mathbb{G})q_\nu$ for $q_\nu$ a projection such that $gq_\nu=g$ for all $g\in N_\nu$ \cite{BLACK}. It is the case that for all $f\in F(\mathbb{G})$,
\begin{equation*}
\nu(q_\nu)=\nu(fq_\nu)=\nu(q_\nu f)=0.\label{eq2}
\end{equation*}
This implies in particular that $\nu(N_\nu)=\{0\}$. Define $p_\nu:=\mathds{1}_\mathbb{G}-q_\nu$. It is the case that for all $f\in F(\mathbb{G})$,
\begin{equation}\nu(f)=\nu(p_\nu f)=\nu(fp_\nu)=\nu(p_\nu fp_\nu),\label{eq1}\end{equation}
and that $\nu(p_\nu)=1$.  $p_\nu$ is the smallest projection such that $\nu(p_\nu)=1$. Call $p_\nu$ by the \emph{support projection} of $\nu$.

\subsection{Random Walks on Quantum Groups}
\begin{definition}
  A random walk on a  quantum group is given by a state, $\nu\in M_p(\mathbb{G})$.
\end{definition}
To study a random walk on a quantum group is to study its distribution after $k$ transitions: the semigroup of convolution powers, $(\nu^{\star k})_{k\geq 1}$, defined inductively through
$$\nu^{\star (k+1)}=(\nu\otimes \nu^{\star k})\circ \Delta.$$
Of central interest are random walks that are \emph{ergodic}:
\begin{definition}
  A random walk $\nu$ on a finite quantum group is said to be \emph{ergodic} if the convolution powers $(\nu^{\star k})_{k\geq 1}$ converge to the Haar state. In this context, denote the Haar state by $\pi$, call the Haar state by the \emph{random distribution}, and say the random walk \emph{converges to random}.
\end{definition}
The random walk $\nu$ is associated  with a stochastic operator $T_\nu:F(\mathbb{G})\rightarrow F(\mathbb{G})$ (see Section \ref{Stoch}), and this object plays a key role in the current work.
\subsection{The Dual of a Quantum Group}

Consider the space, $\widehat{F(\mathbb{G})}$, of linear functionals on $F(\mathbb{G})$ of the form
 $$g\mapsto \int_{\mathbb{G}} gf\,.\,\,\qquad (f,\,g\in F(\mathbb{G}))$$
As $F(\mathbb{G})$ is finite-dimensional, the continuous and algebraic duals coincide. Furthermore,  the Haar state is faithful and so
 $$\langle f,g\rangle:=\int_{\mathbb{G}} f^*g$$
  defines an inner product making $F(\mathbb{G})$ a Hilbert space. Via the Riesz Representation Theorem for Hilbert spaces, for every element  $\varphi\in F(\mathbb{G})^*$, there exists a density $f_{\varphi}^*\in F(\mathbb{G})$ such that:
  $$\varphi(g)=\langle f_{\varphi}^*,g\rangle=\int_\mathbb{G} f_{\varphi}g\,,\,\,\qquad (g\in F(\mathbb{G}))$$
  so that $F(\mathbb{G})^*=\widehat{F(\mathbb{G})}$. This space can be given the structure of an algebra of functions on a finite quantum group by employing the contravariant dual functor to $F(\mathbb{G})$ and its structure maps.  The finite quantum group formed in this way is called the \emph{dual} of the finite quantum group $\mathbb{G}$, and is denoted by $\widehat{\mathbb{G}}$.    Note that $M_p(\mathbb{G})$ is a subset of $F(\widehat{\mathbb{G}})$.  The $*$-involution on $F(\widehat{\mathbb{G}})$ is given by:

    $$\varphi^*(f)=\overline{\varphi(S(f)^*)}\,.\,\,\qquad (\varphi\in F(\widehat{\mathbb{G}}),\,f\in F(\mathbb{G}))$$

   A density $f_\nu\in F(\mathbb{G})$ defines a state $\nu\in M_p(\mathbb{G})$ if and only if $f_\nu$ is positive and $\int_\mathbb{G} f_\nu=1$. Denote the map $f_\nu\mapsto \nu$ by $\mathcal{F}$. The density of $\varepsilon$ is  $f_\varepsilon=\eta/\int_\mathbb{G} \eta$, while the density of the Haar state is just $f_{\int_\mathbb{G}}=\mathds{1}_\mathbb{G}$. In the sequel, unless specified otherwise, $f_\nu$ will denote the density of a state $\nu\in M_p(\mathbb{G})$.

\bigskip

\subsection{Group Algebras\label{Cocom}}
 Fixing for this section $G$ a classical group, and denoting $\mathbb{C}G=F(\widehat{G})$, the algebra structure of $F(\widehat{G})$ is the image of $G$, together with its structure maps, and group axiom commutative diagrams, under the free functor. Such algebras of functions are called \emph{cocommutative}, and satisfy $\Delta=\tau\circ\Delta$, where $\tau(f\otimes g)=g\otimes f$ is the flip map. Each subgroup $H\leq G$ gives a non-zero projection denoted $\chi_H\in 2^{\widehat{G}}$:
\begin{equation}\chi_H:=\frac{1}{|H|}\sum_{h\in H}\delta^h.\label{GAP}\end{equation}
Note that $\chi_{\{e\}}=\delta^e=\mathds{1}_{\widehat{G}}$, and that $\chi_H=\int_H\in M_p(G)\subset F(\widehat{G})$.

\bigskip

States on $F(\widehat{G})$ are given by positive definite functions $u\in M_p(\widehat{G})\subset F(G)$ (see Bekka, de la Harpe, and Valette (Proposition C.4.2, \cite{Bekka})). Furthermore, there is a bijective correspondence between positive definite functions and unitary representations on $G$ together with a  vector. In particular, for each positive definite function $u$ there exists a unitary representation $\rho:G\rightarrow \operatorname{GL}(H)$ and a  vector $\xi\in H$ such that
\begin{equation} u(s)=\langle\xi,\rho(s)\xi\rangle,\label{posdef}\end{equation}
and for each unitary representation $\rho$ and  vector $\xi$ (\ref{posdef}) defines a positive definite function on $G$. For $u$ to be a state, it is necessary that $u(e)=1$ and so $\langle \xi,\xi\rangle=1$; i.e. $\xi$ is a unit vector. Therefore probabilities on $\widehat{G}$ can be chosen by selecting a given representation and unit vector.

\bigskip

The comultiplication on $F(\widehat{G})$ being $\delta^s\mapsto \delta^s\otimes \delta^s$ implies that for a random walk on $\widehat{G}$ given by $u\in M_p(\widehat{G})$, the convolution powers are $(u^{\star k})_{k\geq 1}=(u^k)_{k\geq 1}$, the pointwise-multiplication powers. The Haar state is given by $\delta^e=:\int_{\widehat{G}}$, and so the random walk $u$ is ergodic if and only if $|u(s)|=1$ for $s=e$ only.

\bigskip

\section{Stochastic Operators\label{Stoch}}
\subsection{Definition and Properties}
Considered as a Markov chain with finite state space $G=\{s_1,\dots,s_{|G|}\}$, a  random walk on a classical group (\ref{rv}) \emph{driven} by $\nu\in M_p(G)$  has stochastic operator $T_\nu\in M_{|G|}(\mathbb{C})$:
$$[T_\nu]_{ij}=\mathbb{P}[\xi_{k+1}=s_i\,|\,\xi_k=s_j]=\nu({s_is_j^{-1}}).$$
Then $T_\nu$ is an operator on $F(G)$ equal to
$$T_\nu=(\nu\otimes I_{F(G)})\circ \Delta.$$
Thus given a random walk on a  quantum group $\mathbb{G}$ driven by $\nu\in M_p(\mathbb{G})$, define its \emph{stochastic operator} by the same formula.
Sometimes the notation $P_\nu$ is used for $(\nu\otimes I_{F(\mathbb{G})})\circ \Delta$, and, as in Franz and Gohm \cite{FG}, $T_\nu$ reserved for $(I_{F(\mathbb{G})}\otimes \nu)\circ \Delta$. This boils down to a choice between generalising a right-invariant walk (\ref{rv}), or a left-invariant walk:
$$\xi_k=\xi_{k-1}\zeta_{k}.$$
This current work is using the generalisation of a right-invariant walk, and so the stochastic operator $(\nu\otimes I_{F(\mathbb{G})})\circ \Delta$ is used, with the notation $T_\nu$ to avoid a clash in notation with $p_\nu$, the support projection of a state $\nu\in M_p(\mathbb{G})$.

\bigskip

In the usual way, via its transpose, $T_\nu$ gives an operator on $F(\widehat{\mathbb{G}})$, given by, for $\varphi\in F(\widehat{\mathbb{G}})$ and $f\in F(\mathbb{G})$:
$$T_\nu^t(\varphi)(f)=\varphi(T_\nu(f)).$$ In the sequel write $\varphi T_\nu$ for $T_\nu^t(\varphi)$.
\begin{proposition}\label{propertiesofP} Let $\mathbb{G}$ be a finite quantum group and $\mu,\,\nu\in M_p(\mathbb{G})$. Then the following hold:
\begin{enumerate}
\item[i.] $\mu T_\nu=\nu\star\mu$.
\item[ii.] $T_\nu^k=T_{\nu^{\star k}}$.
\item[iii.]  $\varepsilon T_\nu^k=\nu^{\star k}$.
\item[iv.] $T_\nu$ is unital and completely positive.
\item[v.] $M_p(\mathbb{G})$ is invariant under $T_\nu$.
\item[vi.] $\displaystyle \int_\mathbb{G}\circ T_\nu=\int_{\mathbb{G}}$\qquad $\bullet$
\end{enumerate}
\end{proposition}

To use the results of Fagnola and Pellicer  the stochastic operator must be a \emph{Schwarz Map}. All completely positive maps are Schwarz.

\subsection{Spectral Analysis}\label{Spectral}
Given a random walk on a finite quantum group, as $T_\nu$ is a linear operator on a finite-dimensional $\mathrm{C}^\ast$-algebra $F(\mathbb{G})$, the convergence of $(T_\nu^k)_{k\geq 1}$, and thus via Proposition \ref{propertiesofP} iii. of the convolution powers $(\nu^{\star k})_{k\geq 1}$, is determined by its spectrum, $\sigma(T_\nu)$. Thus much of the standard spectral analysis of Markov chain stochastic operators (see, for example, \cite{cecc}), applies in the quantum context. This analysis is often focussed on ergodic random walks, where $1\in\sigma(T_\nu)$ is multiplicity-free, and the only eigenvalue of modulus one. Following Evans and H{\o}egh-Krohn \cite{EVANS}, in a context more general than random walks on  finite quantum groups, Fagnola and Pellicer say a number of things about $\sigma(T_\nu)$:
   \begin{proposition}\label{mult}
   If $T_\nu$ is the stochastic operator of a random walk on a finite quantum group, then $\sigma(T_\nu)\subset \overline{\mathbb{D}}$. If $1\in\sigma(T_\nu)$ is multiplicity-free, then, for some $d\geq1$, $\sigma(T_\nu)\cap \mathbb{T}$ consists precisely of all the $d$th roots of unity $\bullet$
   \end{proposition}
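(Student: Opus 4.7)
The first assertion is immediate from the preceding results: by Proposition \ref{propertiesofP} viii., $\|T_\nu\|=1$, and the spectral radius of any bounded operator is at most its norm, so every $\lambda\in\sigma(T_\nu)$ satisfies $|\lambda|\leq 1$.

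For the peripheral spectrum the plan is to follow the Evans--H{\o}egh-Krohn / Fagnola and Pellicer strategy, exploiting that $T_\nu$ is a unital completely positive (hence Kadison--Schwarz) map preserving the faithful tracial Haar state $\int_G$ (Proposition \ref{propertiesofP} iv., vi.). Fix $\lambda\in\sigma(T_\nu)\cap\mathbb{T}$; because $F(G)$ is finite dimensional, $\lambda$ is an eigenvalue with some eigenvector $f\in F(G)$, $T_\nu(f)=\lambda f$. First I would apply the Kadison--Schwarz inequality to get $T_\nu(f^*f)\geq T_\nu(f)^*T_\nu(f)=|\lambda|^2 f^*f=f^*f$, then integrate both sides. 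Haar invariance gives $\int_G T_\nu(f^*f)=\int_G f^*f$, so faithfulness forces the equality $T_\nu(f^*f)=f^*f$. Thus $f^*f$ is a $T_\nu$-fixed positive element, and under the multiplicity-free hypothesis the fixed-point space is spanned by $\mathds{1}_G$; the same argument applied to $f^*$ shows $ff^*$ is fixed too. Hence $f^*f$ and $ff^*$ are scalar multiples of $\mathds{1}_G$, and after rescaling $f$ becomes a unitary eigenvector of $T_\nu$ for $\lambda$.

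Next I would invoke the standard multiplicative-domain consequence of equality in Kadison--Schwarz: the saturation $T_\nu(f^*f)=T_\nu(f)^*T_\nu(f)$ forces $T_\nu(fg)=T_\nu(f)T_\nu(g)$ and $T_\nu(gf)=T_\nu(g)T_\nu(f)$ for every $g\in F(G)$ (this is the Choi multiplicative domain lemma, and is precisely the technical input Fagnola and Pellicer use). Applying this to two peripheral unitary eigenvectors $u_1,u_2$ with eigenvalues $\lambda_1,\lambda_2$ yields $T_\nu(u_1u_2)=\lambda_1\lambda_2\, u_1u_2$, and $T_\nu(u_1^*)=\overline{\lambda_1}u_1^*=\lambda_1^{-1}u_1^*$. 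Therefore $\sigma(T_\nu)\cap\mathbb{T}$ is closed under products and inverses, i.e.\ is a subgroup of $\mathbb{T}$; finite dimensionality of $F(G)$ makes it a finite subgroup, and every such subgroup is cyclic, giving $\sigma(T_\nu)\cap\mathbb{T}\cong C_d$.

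The only genuinely non-trivial step is the upgrade from the Kadison--Schwarz equality on a single element to the full multiplicative-domain identity $T_\nu(fg)=T_\nu(f)T_\nu(g)$; everything else is either a norm computation or a bookkeeping argument about subgroups of $\mathbb{T}$. Since this multiplicative-domain lemma, together with the group structure of the peripheral spectrum, is exactly what Fagnola and Pellicer establish in their Perron--Frobenius theorem for unital Schwarz maps on finite dimensional $\mathrm{C}^*$-algebras, the proposition can in fact be quoted almost directly from \cite{FP}, once one has checked (as done in the preceding subsection) that $T_\nu$ verifies the required positivity, unitality, and faithful-invariant-state hypotheses.
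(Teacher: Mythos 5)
The paper does not actually prove this proposition: it is quoted verbatim from Fagnola and Pellicer (following Evans and H{\o}egh-Krohn), which is why it carries a terminal $\bullet$ and no proof environment. Your reconstruction is correct and is essentially the argument of that cited source --- spectral radius bounded by $\|T_\nu\|=1$ for the first claim, and then Kadison--Schwarz saturation forced by invariance and faithfulness of $\int_G$, followed by Choi's multiplicative domain lemma, to turn peripheral eigenvectors into unitaries whose products and adjoints are again eigenvectors, so that $\sigma(T_\nu)\cap\mathbb{T}$ is a finite subgroup of $\mathbb{T}$ and hence cyclic.
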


%\newpage
\section{Irreducibility}\label{irred}
A random walk on a classical group $G$ is said to be reducible if there are group elements that the random walk can not visit. If there are no such elements, the random walk is said to be irreducible. If $S\subset G$ is any subset of $G$ not visited by the walk, the indicator function $\mathds{1}_S$ has the property that  $\nu^{\star k}(\mathds{1}_S)=0$ for all $k\in \mathbb{N}$. This motivates the following definition (related to \emph{degeneracy} of states: see Lemma 3.3, \cite{Simeng}):
\begin{definition}
  A random walk on a finite quantum group $\mathbb{G}$ given by $\nu\in M_p(\mathbb{G})$ is said to be \emph{reducible} if there exists a non-zero $q\in 2^\mathbb{G}$ such that $\nu^{\star k}(q)=0$ for all $k\in\mathbb{N}$. If there are no such non-zero projections, the random walk is said to be \emph{irreducible.}
\end{definition}

The conditions for a random walk on a classical group to be irreducible are rather straightforward: a random walk on a classical group is irreducible if and only if the support is not concentrated on a proper subgroup.

\bigskip

\begin{definition}\label{subgrpdef}
If $\mathbb{G}$ and $\mathbb{H}$ are finite quantum groups and $\pi:F(\mathbb{G}) \rightarrow F(\mathbb{H})$ is a surjective
unital $^*$-homomorphism such that
$$\Delta_{F(\mathbb{H})}\circ \pi=(\pi\otimes\pi)\circ \Delta_{F(\mathbb{G})},$$
then $\mathbb{H}$ is called a \emph{subgroup} of $\mathbb{G}$.
\end{definition}
In the classical case, \emph{any} non-empty subset $\Sigma\subseteq G$ generates a subgroup $\langle\Sigma \rangle\leq G$. The quantum generalisation of this statement is not true.
\subsection{Idempotent States}
Consider a random walk on a finite quantum group $\mathbb{G}$ given by $\nu\in M_p(\mathbb{G})$. If the convolution powers $(\nu^{\star k})_{k\geq 1}$ converge they converge to an idempotent, a state $\nu_\infty$ such that $\nu_{\infty}=\nu_{\infty}\star \nu_{\infty}$. The Kawada-It\^{o} Theorem implies that for classical groups, all idempotent states are integration against the uniform Haar measure on some subgroup \cite{Kaw}.

\subsubsection{Group-Like Projections}
The notion of a group-like projection in the algebra of functions on a quantum group was first introduced by Lanstad and Van Daele \cite{Land}.
\begin{definition}
  A non-zero $p\in 2^\mathbb{G}$ is called a group-like projection if
  $$\Delta(p)(\mathds{1}_\mathbb{G}\otimes p)=p\otimes p.$$
\end{definition}
It can be shown that $\varepsilon(p)=1$ and $S(p)=p$ \cite{Land}. It is not difficult to show that if $\mathbb{H}\leq \mathbb{G}$ is a subgroup, $\mathds{1}_\mathbb{H}\in 2^\mathbb{G}$ is a group-like projection. Franz and Skalski show that there is a one-to-one correspondence between idempotent states and group-like projections.  In particular, they prove:
\begin{proposition}(Cor. 4.2, \cite{idempotent})\label{4.2}
  Let $\phi\in M_p(\mathbb{G})$. The following are equivalent:
  \begin{enumerate}
    \item[i.] $\phi$ is idempotent
    \item[ii.] there exists a group-like projection $p\in 2^\mathbb{G}$ such that for all $f\in F(\mathbb{G})$:
    $$\phi(f)=\frac{1}{\int_\mathbb{G} p}\int_\mathbb{G}fp\qquad \bullet$$
  \end{enumerate}
\end{proposition}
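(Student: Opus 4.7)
The plan is to prove the two directions separately. The forward direction (ii) implies (i) is a direct computation from the group-like identity; the reverse (i) implies (ii) requires extracting a group-like projection from an abstract idempotent state, and this is where the real work lies.

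For (ii) implies (i): given a group-like projection $p\in 2^G$, define $\phi(f):=\frac{1}{\int_G p}\int_G fp$. State-ness is immediate — $\phi(\mathds{1}_G)=1$, and for $f=g^*g\geq 0$ the traciality of the Haar state together with $p=p^2=p^*$ gives $\int_G g^*g\,p=\int_G (gp)^*(gp)\geq 0$. For idempotence, expand
\[
(\phi\star\phi)(f)=\frac{1}{(\int_G p)^2}\left(\textstyle\int_G\otimes\int_G\right)\left[\Delta(f)(p\otimes p)\right],
\]
substitute $p\otimes p=\Delta(p)(\mathds{1}_G\otimes p)$, use that $\Delta$ is a $*$-homomorphism so $\Delta(f)\Delta(p)=\Delta(fp)$, and apply right-invariance of the Haar state (equation (\ref{invar})) in the first leg to collapse the expression back to $\frac{1}{\int_G p}\int_G fp=\phi(f)$.

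For (i) implies (ii): let $p:=p_\phi$ be the support projection of $\phi$, so by (\ref{eq1}) one has $\phi(fp)=\phi(pf)=\phi(f)$ for all $f\in F(G)$, and $\phi(p)=1$. The heart of the argument is showing the group-like relation $\Delta(p)(\mathds{1}_G\otimes p)=p\otimes p$. Idempotence immediately yields the scalar identity
\[
(\phi\otimes\phi)\left[\Delta(p)(\mathds{1}_G\otimes p)\right]=(\phi\otimes\phi)\Delta(p)=\phi(p)=1=(\phi\otimes\phi)[p\otimes p],
\]
so the two candidate elements agree after pairing with $\phi\otimes\phi$. The main obstacle is upgrading this to an equality in $F(G)\otimes F(G)$. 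To do so I would set $Y:=p\otimes p-\Delta(p)(\mathds{1}_G\otimes p)$, factor $Y=(p\otimes\mathds{1}_G-\Delta(p))(\mathds{1}_G\otimes p)$ using $p^2=p$, and apply a Cauchy--Schwarz-style inequality for the positive functional $\phi\otimes\phi$ together with the fact that $\phi$ is faithful on the corner $pF(G)p$ (by minimality of the support projection). This forces the positive element $Y^*Y$ to be annihilated on that corner and, combined with the $(\mathds{1}_G\otimes p)$ factor on the right, gives $Y=0$.

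Once $p$ is known to be group-like, the forward direction already produces an idempotent state $\psi(f):=\frac{1}{\int_G p}\int_G fp$ with support projection bounded by $p$. To finish I would verify $\phi=\psi$: both functionals vanish whenever $pfp=0$, reducing the comparison to the finite-dimensional $\mathrm{C}^*$-subalgebra $pF(G)p$, on which $\psi$ is by construction the normalised restricted Haar trace and $\phi$, being a faithful idempotent state on this corner, must coincide with it by uniqueness of the tracial idempotent on a finite-dimensional factor (or equivalently, by invoking Van Daele's Convolution Theorem \ref{VDCT} to conclude that the density of $\phi$ is a scalar multiple of $p$). The non-trivial step throughout is the Cauchy--Schwarz lifting in the third paragraph; everything else is bookkeeping around the support-projection formalism of Section \ref{supports}.
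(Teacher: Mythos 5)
The paper offers no proof of this proposition --- it is imported verbatim from Franz and Skalski (Cor.\ 4.2 of \cite{idempotent}) --- so your argument has to stand on its own. The direction (ii) $\Rightarrow$ (i) is correct: traciality of the Haar state gives positivity, and substituting $p\otimes p=\Delta(p)(\mathds{1}_G\otimes p)$ and then applying the invariance (\ref{invar}) in the first leg does collapse $(\phi\star\phi)(f)$ to $\phi(f)$.

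The gap is in (i) $\Rightarrow$ (ii), precisely at the step you identify as the heart of the matter. Write $q=\mathds{1}_G-p$. Your element $Y=p\otimes p-\Delta(p)(\mathds{1}_G\otimes p)$ does satisfy $(\phi\otimes\phi)(Y^*Y)=0$: expanding $(p\otimes\mathds{1}_G-\Delta(p))^2$ and using $\phi(pf)=\phi(fp)=\phi(f)$ in each leg, the four terms contribute $1-1-1+1$. But since the support projection of $\phi\otimes\phi$ is $p\otimes p$, this vanishing only yields $Y(p\otimes p)=0$, i.e.\ $\Delta(p)(p\otimes p)=p\otimes p$, equivalently $p\otimes p\leq\Delta(p)$. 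What remains is $Y=-\Delta(p)(q\otimes p)$, whose absolute square $Y^*Y=(q\otimes p)\Delta(p)(q\otimes p)$ lies in $qF(G)q\otimes pF(G)p$; on that corner $\phi\otimes\phi$ vanishes identically, because $\phi(qfq)=\phi(f)-\phi(pf)-\phi(fp)+\phi(pfp)=0$ for every $f$. So faithfulness of $\phi$ on $pF(G)p$ buys you nothing in the first leg, the vanishing of $(\phi\otimes\phi)(Y^*Y)$ is vacuous there, and $Y=0$ does not follow. The repair is to test against $\int_G\otimes\,\phi$ rather than $\phi\otimes\phi$: the invariance (\ref{invar}) gives $\int_G\circ\,(I_{F(G)}\otimes\phi)\circ\Delta=\int_G$; setting $E(p):=(I_{F(G)}\otimes\phi)\Delta(p)$ one checks $pE(p)=E(p)p=p$ (from $\phi(E(p))=\phi(p)=1$ and $0\leq E(p)\leq\mathds{1}_G$) and $\int_G\bigl(E(p)-p\bigr)=0$ with $E(p)-p=qE(p)q\geq 0$, whence $E(p)=p$ by faithfulness of the Haar state; then $(\int_G\otimes\,\phi)\bigl[(q\otimes p)\Delta(p)(q\otimes p)\bigr]=\int_G qE(p)q=0$, and $\int_G\otimes\,\phi$ \emph{is} faithful on $F(G)\otimes pF(G)p$, which kills the residual piece and completes the group-like identity. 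Separately, your closing identification of $\phi$ with the normalised Haar functional appeals to ``uniqueness of the tracial idempotent on a finite-dimensional factor,'' but $pF(G)p$ is generally not a factor and $\phi$ is not a priori tracial on it; that step also needs the Haar-state invariance (or the density argument via Theorem \ref{VDCT} that you mention) rather than an abstract uniqueness claim.
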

In particular, if $\phi$ is an idempotent state, its density is $f_\phi=p/\int_\mathbb{G}p$ and
$$\phi=\mathcal{F}(f_\phi)=\frac{1}{\int_\mathbb{G}p}\mathcal{F}(p).$$

In the setting of locally compact quantum groups, Kasprzak and So{\l}tan \cite{Kasp} use the Gelfand philosophy to refer to the virtual object corresponding to a group-like projection as a \emph{quasi-subgroup}. This paper will take the same approach, associating to a group-like projection $p$ a quasi-subgroup $\mathbb{S}\subseteq \mathbb{G}$, and writing $p=:\mathds{1}_\mathbb{S}$, and the associated idempotent state by $\phi_\mathbb{S}$. Note also that a subgroup $\mathbb{H}\leq \mathbb{G}$ is a quasi-subgroup.

\begin{proposition}\label{suppdens}
If $\mathbb{S}\subset \mathbb{G}$ is a proper quasi-subgroup given by a group-like projection $\mathds{1}_\mathbb{S}$, the support of $\phi_\mathbb{S}$, $p_{\phi_\mathbb{S}}=\mathds{1}_\mathbb{S}.$
\end{proposition}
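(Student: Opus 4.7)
The plan is to exploit Proposition \ref{4.2}, which gives $\phi_S(f) = (\int_G \mathds{1}_S)^{-1}\int_G f\,\mathds{1}_S$, together with the characterisation of $p_\nu$ from Section \ref{supports} as the smallest projection on which $\nu$ evaluates to $1$. The argument has two halves: an inequality $p_{\phi_S} \leq \mathds{1}_S$ that drops out of the formula for $\phi_S$, and an inequality in the other direction that uses faithfulness of the Haar state.

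First I would verify that $\phi_S(\mathds{1}_S) = 1$. This is immediate from the formula, since $\mathds{1}_S \cdot \mathds{1}_S = \mathds{1}_S$ as a projection, so $\phi_S(\mathds{1}_S) = (\int_G \mathds{1}_S)^{-1} \int_G \mathds{1}_S = 1$. Because $p_{\phi_S}$ is by construction the smallest projection with $\phi_S(p_{\phi_S}) = 1$, this already forces $p_{\phi_S} \leq \mathds{1}_S$.

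For the reverse inequality, set $q := \mathds{1}_S - p_{\phi_S}$. Since $p_{\phi_S} \leq \mathds{1}_S$ we have $p_{\phi_S}\mathds{1}_S = p_{\phi_S} = \mathds{1}_S p_{\phi_S}$, so $q$ is itself a projection with $q \leq \mathds{1}_S$, whence $q\mathds{1}_S = q$. Then
\begin{equation*}
\phi_S(q) \;=\; \phi_S(\mathds{1}_S) - \phi_S(p_{\phi_S}) \;=\; 1 - 1 \;=\; 0,
\end{equation*}
while on the other hand
\begin{equation*}
\phi_S(q) \;=\; \frac{1}{\int_G \mathds{1}_S}\int_G q\,\mathds{1}_S \;=\; \frac{1}{\int_G \mathds{1}_S}\int_G q.
\end{equation*}
Therefore $\int_G q = 0$, and since $q$ is a positive element and the Haar state $\int_G$ is faithful, $q = 0$, i.e.\ $p_{\phi_S} = \mathds{1}_S$.

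There is no real obstacle here: the group-like property of $\mathds{1}_S$ is absorbed entirely into the statement of Proposition \ref{4.2}, and the remainder is just bookkeeping with projections and faithfulness of $\int_G$. The only subtle point worth flagging in the write-up is the verification that $\mathds{1}_S - p_{\phi_S}$ really is a projection, which rests on the commutation $p_{\phi_S}\mathds{1}_S = p_{\phi_S}$ coming from the order relation $p_{\phi_S} \leq \mathds{1}_S$.
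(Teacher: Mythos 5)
Your proof is correct and follows essentially the same route as the paper's: establish $\phi_S(\mathds{1}_S)=1$ from the formula of Proposition \ref{4.2}, deduce $p_{\phi_S}\leq\mathds{1}_S$ from minimality of the support, show $\int_G(\mathds{1}_S-p_{\phi_S})=0$ using $\phi_S(p_{\phi_S})=1$, and conclude by faithfulness of the Haar state. Your write-up is in fact a touch more careful than the paper's in explicitly checking that $\mathds{1}_S-p_{\phi_S}$ is a projection (hence positive) before invoking faithfulness, but this is a presentational refinement rather than a different argument.
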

\begin{proof}
By Proposition \ref{4.2},
$$\phi_\mathbb{S}=\frac{1}{\int_\mathbb{G} \mathds{1}_\mathbb{S}}\mathcal{F}(\mathds{1}_\mathbb{S}),$$
is an idempotent state such that
$$\phi_\mathbb{S}(\mathds{1}_\mathbb{S})=\int_\mathbb{G} \mathds{1}_\mathbb{S}\frac{1}{\int_\mathbb{G} \mathds{1}_\mathbb{S}}\mathds{1}_\mathbb{S}=1,$$
as $\mathds{1}_\mathbb{S}\in 2^\mathbb{G}$.  Let $p_{\phi_\mathbb{S}}$ be the support projection of $\phi_\mathbb{S}$, so that $p_{\phi_\mathbb{S}}<\mathds{1}_\mathbb{S}$, and $p_{\phi_\mathbb{S}}\mathds{1}_\mathbb{S}=p_{\phi_\mathbb{S}}$. Consider

$$\int_\mathbb{G}(\mathds{1}_\mathbb{S}-p_{\phi_\mathbb{S}})=\int_\mathbb{G}(\mathds{1}_\mathbb{S}-p_{\phi_\mathbb{S}} \mathds{1}_\mathbb{S})=\int_\mathbb{G} \mathds{1}_\mathbb{S}-\int_\mathbb{G}p_{\phi_\mathbb{S}} \mathds{1}_\mathbb{S}.$$

Note

$$\int_\mathbb{G}p_{\phi_\mathbb{S}} \mathds{1}_\mathbb{S}=\int_\mathbb{G}\mathds{1}_\mathbb{S}\cdot \int_\mathbb{G}p_{\phi_\mathbb{S}} \frac{\mathds{1}_\mathbb{S}}{\int_\mathbb{G} \mathds{1}_\mathbb{S}}=\int_\mathbb{G} \mathds{1}_\mathbb{S}\cdot \phi_\mathbb{S} (p_{\phi_\mathbb{S}})= \int_\mathbb{G} \mathds{1}_\mathbb{S}\cdot 1 =\int_\mathbb{G}\mathds{1}_\mathbb{S},$$

so that $\int_\mathbb{G} (\mathds{1}_\mathbb{S}-p_{\phi_\mathbb{S}})=0$, and as $\int_\mathbb{G}$ is faithful, $\mathds{1}_\mathbb{S}-p_{\phi_\mathbb{S}}=0$, and so the group-like projection of an idempotent state is also its support $\bullet$
\end{proof}
This consideration, and Proposition \ref{4.2}, motivates:
\begin{definition}
A state $\nu\in M_p(\mathbb{G})$ is \emph{supported on a quasi-subgroup} $\mathbb{S}$ if, where $\mathds{1}_\mathbb{S}$ is the group-like projection associated with $\mathbb{S}$, $\nu(\mathds{1}_\mathbb{S})=1$.
\end{definition}

It will be seen shortly that there are quasi-subgroups that are not subgroups. The easiest way to see this is through the following theorem:
\begin{theorem}(Th. 4.5, \cite{idempotent})\label{subgroup}
Let $\mathbb{G}$ be a   quantum group and $\phi_\mathbb{S}\in M_p(\mathbb{G})$ an idempotent state with group-like projection $\mathds{1}_\mathbb{S}$. The following are equivalent:
\begin{enumerate}
  \item[i.] $\mathbb{S}\leq \mathbb{G}$ is a subgroup;
  \item[ii.] the null space $N_{\phi_\mathbb{S}}$ is a two-sided ideal of $F(\mathbb{G})$;
  \item[iii.] the null space $N_{\phi_\mathbb{S}}$ is a self-adjoint ideal of $F(\mathbb{G})$;
  \item[iv.] the null space $N_{\phi_\mathbb{S}}$ is an (antipode) $S$-invariant ideal of $F(\mathbb{G})$;
  \item[v.] the projection $\mathds{1}_\mathbb{\mathbb{S}}$ is central $\bullet$
\end{enumerate}
\end{theorem}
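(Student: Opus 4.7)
The plan is to prove the chain of equivalences in the order $(v) \Leftrightarrow (ii) \Leftrightarrow (iii) \Leftrightarrow (iv)$, all of which are essentially $\mathrm{C}^*$-algebraic, and then tackle $(i) \Leftrightarrow (v)$, which is where the Hopf-algebra structure does real work. By Proposition \ref{suppdens}, the support projection of $\phi_S$ is $\mathds{1}_S$ itself, so $q := \mathds{1}_G - \mathds{1}_S$ is the projection generating $N_{\phi_S}$ as a left ideal, $N_{\phi_S} = F(G)q$. All statements (ii)--(v) are then statements about the projection $q$ or the ideal $F(G)q$.

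For the $\mathrm{C}^*$-algebraic equivalences I would argue as follows. The implication $(v)\Rightarrow (ii)$ is immediate: centrality of $\mathds{1}_S$ gives centrality of $q$, so $F(G)q = qF(G)$ is two-sided. For $(ii)\Rightarrow (v)$, a left ideal $F(G)q$ in a multi-matrix algebra is two-sided if and only if its generating projection is central, a standard fact that can be seen by picking matrix units witnessing non-centrality. For $(iii)\Leftrightarrow (ii)$: any self-adjoint left ideal is automatically two-sided, since $ab = (b^*a^*)^*$, and conversely in a finite-dimensional $\mathrm{C}^*$-algebra every two-sided ideal is generated by a central projection and so is self-adjoint. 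For $(iv)\Leftrightarrow (ii)$ I would use that $S$ is an antimultiplicative bijection with $S(\mathds{1}_S) = \mathds{1}_S$ (hence $S(q)=q$), giving $S(F(G)q) = S(q)S(F(G)) = qF(G)$; so $S$-invariance of $N_{\phi_S}$ is exactly $qF(G) = F(G)q$, i.e.\ centrality of $q$.

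The direction $(i)\Rightarrow (v)$ follows from the decomposition already spelled out in the excerpt after Definition \ref{subgrpdef}: as a finite-dimensional $\mathrm{C}^*$-algebra, $F(G) \cong F(H) \oplus \ker\pi$ as a direct sum of $^*$-algebras (both summands being two-sided ideals of $F(G)$). The unit of the $F(H)$ summand, which is $\imath(\pi(\mathds{1}_G)) = \mathds{1}_S$, is then the central idempotent projecting onto that summand, hence central in $F(G)$.

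The main obstacle, and the only genuinely Hopf-algebraic step, is $(v)\Rightarrow (i)$. Assuming $\mathds{1}_S$ is central, I would set $F(H) := \mathds{1}_S F(G)$ (a unital $^*$-subalgebra with unit $\mathds{1}_S$, equal to $F(G)\mathds{1}_S$ by centrality) and define $\pi: F(G) \to F(H)$, $\pi(f) := \mathds{1}_S f$. This $\pi$ is a surjective unital $^*$-homomorphism because $\mathds{1}_S$ is a central projection. What remains is to equip $F(H)$ with a comultiplication $\Delta_H$ for which $\Delta_H \circ \pi = (\pi\otimes\pi)\circ \Delta$; the natural candidate is the restriction/corestriction of $(\pi\otimes\pi)\circ\Delta$. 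The real content is showing that this lands in $F(H)\otimes F(H)$, which amounts to verifying $(\mathds{1}_S\otimes\mathds{1}_S)\Delta(\mathds{1}_S) = \mathds{1}_S\otimes\mathds{1}_S$. I would get this by combining the group-like identity $\Delta(\mathds{1}_S)(\mathds{1}_G\otimes\mathds{1}_S) = \mathds{1}_S\otimes\mathds{1}_S$, its adjoint $(\mathds{1}_G\otimes\mathds{1}_S)\Delta(\mathds{1}_S) = \mathds{1}_S\otimes\mathds{1}_S$, and the "flipped" version $(\mathds{1}_S\otimes\mathds{1}_G)\Delta(\mathds{1}_S) = \mathds{1}_S\otimes\mathds{1}_S$ obtained by applying $S(\mathds{1}_S)=\mathds{1}_S$, $\varepsilon(\mathds{1}_S)=1$, and the antipode/counit axioms; centrality of $\mathds{1}_S$ is then what lets these one-sided identities be combined. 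Coassociativity, counit, and antipode axioms for $F(H)$ are inherited from $F(G)$ via $\pi$, giving $H$ the structure of a quantum group and the identity $\Delta_H\circ\pi = (\pi\otimes\pi)\circ\Delta$ required by Definition \ref{subgrpdef}, thereby realising $S$ as a genuine subgroup of $G$.
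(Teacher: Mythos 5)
This theorem is one the paper imports without proof: it is stated as Th.~4.5 of Franz--Skalski \cite{idempotent} and closed with the paper's end-of-statement bullet, so there is no in-paper argument to compare against. Judged on its own, your proposal is a correct and essentially complete proof, and it is organised in the same spirit as the original (reduce everything to the projection $q=\mathds{1}_G-\mathds{1}_S$ generating $N_{\phi_S}$, handle (ii)--(v) by finite-dimensional $\mathrm{C}^*$-algebra, and reserve the Hopf-algebraic work for the passage between centrality and the existence of the quotient $\pi:F(G)\to\mathds{1}_SF(G)$). The $\mathrm{C}^*$-steps are all sound, including the slightly delicate point that the canonical right-unit projection of a two-sided ideal must itself be central. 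One remark on where centrality actually does its work in $(v)\Rightarrow(i)$: the identity $(\mathds{1}_S\otimes\mathds{1}_S)\Delta(\mathds{1}_S)=\mathds{1}_S\otimes\mathds{1}_S$ needs no centrality at all --- it follows from the group-like identity $\Delta(\mathds{1}_S)(\mathds{1}_G\otimes\mathds{1}_S)=\mathds{1}_S\otimes\mathds{1}_S$ together with its adjoint, since $(\mathds{1}_S\otimes\mathds{1}_S)\Delta(\mathds{1}_S)=(\mathds{1}_S\otimes\mathds{1}_G)\bigl[(\mathds{1}_G\otimes\mathds{1}_S)\Delta(\mathds{1}_S)\bigr]=\mathds{1}_S\otimes\mathds{1}_S$. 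Centrality is instead what makes $f\mapsto\mathds{1}_Sf$ a surjective unital $*$-homomorphism with two-sided kernel $qF(G)=F(G)q$, after which the well-definedness of $\Delta_H$ on the quotient reduces to exactly that tensor identity, and the counit and antipode descend because $\varepsilon(q)=0$ and $S(q)=q$. So your attribution of the roles of the two hypotheses is slightly off, but the proof goes through as you have laid it out.
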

Thus, by (\ref{centralproj}) and v., given a group-like projection $\mathds{1}_\mathbb{S}$ in a concrete algebra of functions $F(\mathbb{G})\cong \bigoplus_{i}M_{n_i}(\mathbb{C})$,  $\mathds{1}_\mathbb{S}$ corresponds to  a subgroup if and only if it is a sum of full identity matrices.
\subsubsection{Pal's Idempotents}
The Kac--Paljutkin quantum group $\mathfrak{G}_0$ has an algebra of functions structure
$$F(\mathfrak{G}_0)=\mathbb{C}\oplus\mathbb{C}\oplus\mathbb{C}\oplus\mathbb{C} \oplus M_2(\mathbb{C}),$$
with basis elements $\eta$, $e_2$, $e_3$, $e_4$, and $E_{ij}$ for $1\leq i,j\leq 2$.  Pal \cite{Pal} determined that there are eight idempotent states $\{\phi_1,\dots,\phi_8\}\subset M_p(\mathfrak{G}_0)$ on the Kac--Paljutkin quantum group, six of these are non-trivial. Franz and Gohm \cite{FG} show that four of these six correspond to  subgroups (in fact classical groups), but $\phi_6$ and $\phi_7$ are not. By looking at their supports:
\begin{align*}
  p_{\phi_6} & =\eta+e_4+E_{11}, \\
   p_{\phi_7}& =\eta+e_4+E_{22};
\end{align*}
it is easy to see that they correspond to quasi-subgroups that are not subgroups.

\subsubsection{Cocommutative Idempotents}\label{CCID}
Pal's counterexample showed, as Franz and Skalski remark \cite{ergodic}, that the necessary and sufficient conditions for ergodicity of a random walk on a quantum group are ``clearly more complicated'' (than the classical situation). In fact, as was noted after Pal's counterexample, there exists an abundance of quasi-subgroups that are not subgroups as soon as cocommutative algebras of functions are considered.

\bigskip

Every subset $S\subseteq G$ gives an indicator function $\mathds{1}_S$ that is an idempotent in $F(G)$. However not all of these are positive definite functions. Firstly if $\mathds{1}_S$ is to be a state, the subset $S$ must be a subgroup (Ex. 6.C.4, \cite{Bekka}). Consider a cocommutative algebra of functions $F(\widehat{G})$ and $H\leq G$. The indicator function on $H$, $\mathds{1}_H$, is an idempotent state on $\widehat{G}$. Its density is
$$\varphi_{\mathds{1}_H}=\sum_{h\in H}\delta^h=\frac{\chi_H}{\int_{\widehat{G}}\chi_H}.$$
Therefore, by Proposition \ref{4.2} its associated group-like projection is equal to $\chi_H$, and by Proposition \ref{suppdens} the support projection $p_{\mathds{1}_H}=\chi_H\in F(\widehat{G})$.

\bigskip

By Theorem \ref{subgroup} v., the quasi-subgroup given by $\chi_H$ is a subgroup if and only if  $\chi_H$ is central, that is for all $s\in G$:
$$\chi_H\delta^s=\frac{1}{|H|}\sum_{h\in H}\delta^{hs}=\frac{1}{|H|}\sum_{h\in H}\delta^{sh}=\delta^s\chi_H,$$
which is the case if and only if $H$ is a normal subgroup of $G$.

\bigskip

Therefore whenever $H\leq G$ is a non-normal subgroup, $\chi_H$ gives a quasi-subgroup of $\widehat{G}$ which is not a subgroup.

\subsection{Harmonic Projections}
Fagnola and Pellicer  identify  hereditary subalgebras as the appropriate quantum generalisation of functions concentrated on subsets, and this motivates their definition of irreducibility:
\begin{definition}
A stochastic operator is \emph{irreducible in the sense of Fagnola and Pellicer} if there exists no proper hereditary $T_\nu$-invariant $\mathrm{C}^\ast$-subalgebras of $F(\mathbb{G})$. A $p\in 2^\mathbb{G}$ is called $T_\nu$\emph{-subharmonic} if $T_\nu(p)\geq p$.
\end{definition}
 Fagnola and Pellicer prove:
\begin{theorem}\label{prev}
A stochastic operator $T_\nu$ is irreducible in the sense of Fagnola and Pellicer if and only if the only $T_\nu$-subharmonic projections are the trivial $0$ or $\mathds{1}_\mathbb{G}$ $\bullet$
\end{theorem}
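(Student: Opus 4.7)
The plan is to leverage the fact that $F(G)$ is finite-dimensional, so every hereditary $\mathrm{C}^*$-subalgebra has the form $pF(G)p$ for a unique projection $p \in 2^G$, and this subalgebra is proper precisely when $p$ is non-trivial. The theorem thus reduces to the single assertion: for non-trivial $p\in 2^G$, the subalgebra $pF(G)p$ is $T_\nu$-invariant if and only if $T_\nu(p)=p$. I would organise both directions around this pivot.

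For $T_\nu(p)=p$ implies $pF(G)p$ is $T_\nu$-invariant, the natural tool is Choi's multiplicative-domain theorem. Since $T_\nu$ is unital and completely positive (as shown earlier), it satisfies the Schwarz inequality $T_\nu(a^*a)\geq T_\nu(a)^*T_\nu(a)$. Applied to $a=p$, subharmonicity gives $T_\nu(p^2)=T_\nu(p)=p=p^2=T_\nu(p)^2$, which is equality in Schwarz, so $p$ lies in the multiplicative domain of $T_\nu$. Consequently $T_\nu(pfp)=T_\nu(p)T_\nu(f)T_\nu(p)=pT_\nu(f)p\in pF(G)p$ for every $f\in F(G)$, as required.

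For the converse, let $B\subsetneq F(G)$ be a proper hereditary $T_\nu$-invariant subalgebra and write $B=pF(G)p$ with $0\neq p\neq \mathds{1}_G$. Then $T_\nu$-invariance applied to $p\in B$ gives $T_\nu(p)\in B$, i.e.\ $T_\nu(p)=pT_\nu(p)p$. The main obstacle is upgrading this to the sharp equality $T_\nu(p)=p$. My plan is first to show $T_\nu(p)\leq p$: positivity and unitality give $0\leq T_\nu(p)\leq T_\nu(\mathds{1}_G)=\mathds{1}_G$, and a short computation in a faithful $*$-representation (decomposing vectors along the ranges of $p$ and $\mathds{1}_G-p$, and exploiting $T_\nu(p)=pT_\nu(p)p$) shows that any positive element of $pF(G)p$ dominated by $\mathds{1}_G$ is automatically dominated by $p$.

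To pass from inequality to equality, I would invoke the Haar-invariance $\int_G\circ T_\nu=\int_G$ from Proposition \ref{propertiesofP} vi., giving $\int_G(p-T_\nu(p))=0$; since $p-T_\nu(p)\geq 0$ and $\int_G$ is faithful, this forces $T_\nu(p)=p$. I expect this comparison/faithfulness manoeuvre to be the most delicate part of the argument; Choi's theorem and the complete positivity of $T_\nu$ are already on the shelf from the previous subsection, and the identification of hereditary subalgebras with projections is automatic in finite dimensions.
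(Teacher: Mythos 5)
Your proposal is correct, but note that the paper does not actually prove this statement: it is quoted from Fagnola and Pellicer (their Theorem 3.2) and closed with a bullet, so there is no in-paper argument to compare against. Your reconstruction is a legitimate self-contained proof in the finite quantum group setting, and it follows the expected skeleton: identify hereditary subalgebras of the finite-dimensional algebra $F(G)$ with corners $pF(G)p$, then show invariance of the corner is equivalent to subharmonicity of $p$. The multiplicative-domain step is the standard mechanism for the direction $T_\nu(p)=p\Rightarrow T_\nu(pF(G)p)\subseteq pF(G)p$, and your computation $\langle x\xi,\xi\rangle=\langle xp\xi,p\xi\rangle\leq\langle p\xi,\xi\rangle$ correctly upgrades $0\leq T_\nu(p)=pT_\nu(p)p\leq \mathds{1}_G$ to $T_\nu(p)\leq p$. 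One point worth emphasising: your final step, invoking $\int_G\circ T_\nu=\int_G$ together with faithfulness of the Haar trace to force $T_\nu(p)=p$, is doing genuine work that the paper elides. The paper remarks only that Fagnola and Pellicer define subharmonic by $T_\nu(p)\geq p$ ``but the operator norm $\|T_\nu\|=1$'', and the norm condition alone does not collapse the inequality (e.g.\ $p=\operatorname{diag}(1,0)\leq\operatorname{diag}(1,\tfrac12)$ has norm one); it is precisely the faithful invariant trace, available here because $G$ is a finite quantum group, that closes this gap. So your argument is not only correct but slightly more careful than the surrounding text on exactly the point you flagged as delicate.
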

Fagnola and Pellicer use the terminology of $T_\nu$-subharmonic, but the operator norm $\|T_\nu\|=1$, and so $T_\nu$-harmonic is used here. As expected, irreducible in the sense of Fagnola and Pellicer coincides with the definition of irreducible for random walks.
\begin{theorem}\label{FPJ}
  A stochastic operator $T_\nu$ is irreducible in the sense of Fagnola and Pellicer if and only if the associated random walk is irreducible.
\end{theorem}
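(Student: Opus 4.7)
The plan is to combine Theorem \ref{prev} with the elementary behaviour of $T_\nu$ on projections. By that theorem, irreducibility in the sense of Fagnola and Pellicer is equivalent to the absence of non-trivial $T_\nu$-subharmonic projections, so I only need to show that such a projection exists if and only if the random walk is reducible. The two key identities I will rely on are $\mu T_\nu=\nu\star\mu$ and $\varepsilon T_\nu^k=\nu^{\star k}$ from Proposition \ref{propertiesofP}, together with the faithfulness and $T_\nu$-invariance of $\int_G$ and the characterisation of support projections from Section \ref{supports}.

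For the easy direction, suppose $p\in 2^G$ is non-trivial with $T_\nu(p)=p$. Iterating and applying $\varepsilon$ gives $\nu^{\star k}(p)=\varepsilon T_\nu^k(p)=\varepsilon(p)$ for every $k\geq 1$. Since $\varepsilon$ is a $*$-homomorphism and $p$ a projection, $\varepsilon(p)=\varepsilon(p)^2\in\{0,1\}$. If $\varepsilon(p)=0$ then $p$ itself witnesses reducibility; if $\varepsilon(p)=1$, then the non-zero projection $\mathds{1}_G-p$ does, since $\nu^{\star k}(\mathds{1}_G-p)=1-\varepsilon(p)=0$ for all $k\geq 1$. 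Either way, the existence of a non-trivial subharmonic projection forces the walk to be reducible.

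For the harder direction, assume the walk is reducible and construct a non-trivial subharmonic projection from the supports $p_{\nu^{\star k}}$. Let
$$R:=\bigvee_{k\geq 1}p_{\nu^{\star k}}\in 2^G,\qquad p:=\mathds{1}_G-R.$$
A projection $r$ satisfies $\nu^{\star k}(r)=0$ for all $k\geq 1$ exactly when $r\leq \mathds{1}_G-p_{\nu^{\star k}}$ for every $k$, i.e.\ when $r\leq p$; so reducibility is equivalent to $p\neq 0$, while $p\neq\mathds{1}_G$ follows because $p_\nu\leq R$ is non-zero. By construction $\nu^{\star k}(p)=0$ for every $k\geq 1$, and Proposition \ref{propertiesofP} i gives
$$\nu^{\star k}\bigl(T_\nu(p)\bigr)=\bigl(\nu^{\star k}T_\nu\bigr)(p)=\nu^{\star(k+1)}(p)=0\qquad(k\geq 1).$$
Because $T_\nu(p)\geq 0$, the support-projection argument of Section \ref{supports} forces the support of $T_\nu(p)$ to be orthogonal to every $p_{\nu^{\star k}}$, hence orthogonal to $R$, so that $T_\nu(p)=pT_\nu(p)p\leq p$. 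To upgrade this inequality to equality I will use Proposition \ref{propertiesofP} vi: $\int_G (p-T_\nu(p))=\int_Gp-\int_GT_\nu(p)=0$, and since $p-T_\nu(p)\geq 0$ and $\int_G$ is a faithful trace, $T_\nu(p)=p$. Thus $p$ is a non-trivial subharmonic projection, which completes the contrapositive.

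The main obstacle is the second direction: I have to produce a subharmonic projection from an arbitrary reducibility witness, and the natural candidate $p=\mathds{1}_G-\bigvee_k p_{\nu^{\star k}}$ only makes the a priori inequality $T_\nu(p)\leq p$ visible. The step that really does the work is invoking faithfulness of $\int_G$ together with $\int_G\circ T_\nu=\int_G$ to close the gap, a uniquely quantum-probabilistic input with no classical analogue (classically one simply reads off invariance of $H$ from $\operatorname{supp}\nu\subseteq H$).
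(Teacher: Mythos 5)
Your proof is correct, but the substantive direction is argued by a genuinely different route from the paper's. The easy direction (a non-trivial subharmonic projection forces reducibility) is essentially the paper's argument, with your appeal to $\varepsilon$ being a $*$-homomorphism replacing the paper's decomposition $q=\alpha_q\eta\oplus r$ against the Haar element — same content. For the hard direction, the paper assumes irreducibility in the sense of Fagnola and Pellicer and shows every non-zero projection is eventually charged by some $\nu^{\star k}$, by realising $F(G)$ as $B(H)$ and invoking Proposition 2.2 of Evans--H{\o}egh-Krohn; you instead prove the contrapositive constructively, exhibiting the subharmonic projection $p=\mathds{1}_G-\bigvee_{k\geq 1}p_{\nu^{\star k}}$, deducing $T_\nu(p)=pT_\nu(p)p\leq p$ from the support-projection calculus (here you should note explicitly that $pT_\nu(p)p\leq p$ uses $T_\nu(p)\leq T_\nu(\mathds{1}_G)=\mathds{1}_G$, i.e.\ Proposition \ref{propertiesofP} iv.), and closing the gap with $\int_G\circ T_\nu=\int_G$ plus faithfulness of the Haar state. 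Your version is more self-contained and elementary: it needs no external spectral input and uses only the machinery of Section \ref{supports} together with positivity, unitality and Haar-invariance of $T_\nu$. What it does not deliver is the quantitative content of the paper's route: the Evans--H{\o}egh-Krohn argument gives the uniform bound $k_0\leq\dim H$ of Corollary \ref{CAY}, which the paper relies on later in the proof of Theorem \ref{GLPT}; if one adopted your proof wholesale, Corollary \ref{CAY} would still need a separate argument (e.g.\ noting that the increasing chain of supports $p_{\nu_n}$ must stabilise in finitely many steps in a finite-dimensional algebra).
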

\begin{proof}
Assume that $T_\nu$ is irreducible in the sense of Fagnola and Pellicer.
  As $F(\mathbb{G})$ is a finite-dimensional $\mathrm{C}^\ast$-algebra, it may be concretely realised as
\begin{equation}\label{exp}F(\mathbb{G})\cong \bigoplus_{i=1}^N M_{n_i}(\mathbb{C})\subset B(\mathbb{C}^{\dim H}),\end{equation}
the bounded operators on a Hilbert space of dimension $\displaystyle \dim H=\sum_{i=1}^{N}n_i$. An inner product is given by:
$$\langle f,g\rangle=\int_\mathbb{G}f^*g.$$
Let $q\in 2^\mathbb{G}$ and suppose $T_\nu(q)=0$. By Proposition \ref{propertiesofP} iv., $T_\nu(\mathds{1}_{\mathbb{G}}-q)=\mathds{1}_{\mathbb{G}}\geq \mathds{1}_{\mathbb{G}}-q$, so that $\mathds{1}_{\mathbb{G}}-q$ is $T_\nu$-subharmonic. By Theorem \ref{prev}, $\mathds{1}_{\mathbb{G}}-q=\mathds{1}_{\mathbb{G}}$, in which case $q=0$, and there will be nothing to say; or $\mathds{1}_{\mathbb{G}}-q=0$, in which case $q=\mathds{1}_{\mathbb{G}}$ and $T_\nu(q)=\mathds{1}_{\mathbb{G}}\neq 0$.

\bigskip

Therefore assume $T_\nu(q)\neq 0$. If $\nu(q)>0$, there is nothing to say, so assume $\nu(q)=0\Rightarrow \varepsilon(T_\nu(q))=0$. This implies that, where $\eta=\eta^*$ is the Haar element:
$$\int_\mathbb{G}\eta \,T_\nu(q)=0\Rightarrow\langle\eta,T_\nu(q)\rangle=0.$$
As a positive linear map on $B(H)$, $T_\nu$ satisfies the hypothesis of Proposition 2.2 of Evans-H{\o}egh-Krohn \cite{EVANS}. Therefore there exists a $k<\dim H$ such that
$$\langle\eta,(T_\nu)^k(T_\nu(q))\rangle>0\Rightarrow \int_\mathbb{G} \eta\, T_{\nu^{\star (k+1)}}(q)>0.$$
Therefore
\begin{align*}
  \varepsilon(T_{\nu^{\star (k+1)}}(q))  >0\Rightarrow \nu^{\star (k+1)}(q) & >0,
\end{align*}
and so the random walk given by $\nu$ is irreducible.

\bigskip

Suppose now that $T_\nu$ is reducible in the sense of Fagnola and Pellicer, so that there exists a non-trivial $T_\nu$-harmonic $q$ such that $T_\nu(q)=q$ and indeed $T_\nu^k(q)=q$ for all $k\in \mathbb{N}$. This implies that for all $k\in\mathbb{N}$
$$\nu^{\star k}(q)=\varepsilon(T_\nu^k(q))=\varepsilon(q).$$
Where $\eta\in 2^\mathbb{G}$ is the Haar element, if
    \begin{align*}
    q&=\alpha_q\eta\oplus r,
  \end{align*}
  $\alpha_q$ is zero or one. If $\alpha_q=0$ then
  $$\nu^{\star k}(q)=\varepsilon(q)=0$$
  for all $k\in\mathbb{N}$, and so the random walk given by $\nu$ is reducible. If $\alpha_q=1$, then $p:=\mathds{1}_\mathbb{G}-q$ is a non-zero projection such that $\nu^{\star k}(p)=0$ for all $k\in \mathbb{N}$, so that the random walk given by $\nu$ is reducible $\bullet$
\end{proof}
It can be seen in the proof of Theorem \ref{FPJ}, via Evans-H{\o}egh-Krohn, that the $k_0$ referenced below can be taken to be the dimension of the Hilbert space upon which $F(\mathbb{G})$ is the set of bounded operators:
 \begin{corollary}\label{CAY}
   Suppose that $\nu$ is an irreducible random walk on a quantum group. Then there exists $k_0\in\mathbb{N}$ such that for all non-zero $q\in 2^\mathbb{G}$, there exists $k\leq k_0$ such that $\nu^{\star k}(q)>0$.
 \end{corollary}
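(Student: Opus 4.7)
The plan is to extract a uniform bound directly from the mechanism used in the proof of Theorem \ref{FPJ}. Fix the concrete realisation $F(G) \cong \bigoplus_{i=1}^N M_{n_i}(\mathbb{C}) \cong B(H)$ with $\dim H = \sum_{i=1}^N n_i$, and propose $k_0 := \dim H$. Let $q \in 2^G$ be an arbitrary non-zero projection; I would show that some $k \leq k_0$ witnesses $\nu^{\star k}(q) > 0$, and crucially that the argument yielding this $k$ never depends on $q$.

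The first step is to split on whether $\nu(q) > 0$. If so, take $k = 1$, which satisfies $1 \leq k_0$ since $\dim H \geq 1$. Otherwise $\nu(q) = \varepsilon(T_\nu(q)) = 0$, which, writing the Haar state through the Haar element $\eta$, becomes $\langle \eta, T_\nu(q) \rangle = 0$. This is precisely the hypothesis needed to invoke Proposition 2.2 of Evans and H\o egh-Krohn as in Theorem \ref{FPJ}: because $T_\nu$ is irreducible (in the sense of Fagnola and Pellicer, by Theorem \ref{FPJ} applied in reverse to the hypothesis), there exists some integer $k < \dim H$ with $\langle \eta, T_\nu^{k}(T_\nu(q)) \rangle > 0$, which translates to $\nu^{\star (k+1)}(q) > 0$ with $k + 1 \leq \dim H = k_0$.

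The essential point, already implicit in the cited Evans--H\o egh-Krohn proposition, is that the bound on the iteration index depends only on $\dim H$ and not on $q$: one considers the ascending chain of subspaces generated by iterated images of $T_\nu$ starting at $q$, and any strictly increasing chain of subspaces inside $B(H)$ must stabilise within $\dim H$ steps regardless of initial vector. Hence the same $k_0 = \dim H$ serves uniformly for every non-zero $q \in 2^G$, completing the corollary.

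The only real obstacle is making precise that the Evans--H\o egh-Krohn bound is genuinely uniform in the starting projection, rather than being a pointwise-for-each-$q$ statement; but since their proof uses only the finite dimension of the ambient Hilbert space, this is immediate, and so the corollary follows without further work beyond the observations already assembled in Theorem \ref{FPJ}.
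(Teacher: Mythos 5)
Your proposal is correct and takes essentially the same route as the paper: the paper's own justification is precisely the remark that the Evans--H{\o}egh-Krohn step in the proof of Theorem \ref{FPJ} produces a $k<\dim H$ whose bound depends only on $\dim H$ and not on $q$, so $k_0=\dim H$ serves uniformly. The only cosmetic omission is the case $T_\nu(q)=0$, which irreducibility rules out exactly as in the proof of Theorem \ref{FPJ}, so nothing substantive is missing.
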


\subsection{Irreducibility Criterion}

\begin{proposition}\label{quassup}
Let $\nu,\,\mu\in M_p(\mathbb{G})$ be supported on a quasi-subgroup $\mathbb{S}$. Then $\nu\star\mu$ is also supported on $\mathbb{S}$.
\end{proposition}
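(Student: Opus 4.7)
My plan is to evaluate $(\nu\star\mu)(\mathds{1}_S)=(\nu\otimes\mu)(\Delta(\mathds{1}_S))$ directly and show it equals $1$. Once this is established, the minimality of the support projection immediately yields $p_{\nu\star\mu}\leq\mathds{1}_S$, which is precisely the claim that $\nu\star\mu$ is supported on $S$. The two ingredients I will lean on are the defining group-like identity $\Delta(\mathds{1}_S)(\mathds{1}_G\otimes\mathds{1}_S)=\mathds{1}_S\otimes\mathds{1}_S$ and the absorption property of a support projection.

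As a preliminary step I would derive a mirrored version of the group-like identity. Since $\mathds{1}_S$ is a self-adjoint projection and $\Delta$ is a $*$-homomorphism, $\Delta(\mathds{1}_S)$ is self-adjoint; taking adjoints of both sides of $\Delta(\mathds{1}_S)(\mathds{1}_G\otimes\mathds{1}_S)=\mathds{1}_S\otimes\mathds{1}_S$ gives
\begin{equation*}
(\mathds{1}_G\otimes\mathds{1}_S)\Delta(\mathds{1}_S)=\mathds{1}_S\otimes\mathds{1}_S,
\end{equation*}
which is the form best suited to evaluation by a product state.

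Next I would translate the hypothesis $\mu(\mathds{1}_S)=1$ into a computational rule. The projection $r:=\mathds{1}_S-p_\mu$ satisfies $\mu(r^{\ast}r)=\mu(r)=0$, so $r\in N_\mu$, and the left-ideal property gives $\mu(rf)=0$ for every $f\in F(G)$. Combined with $\mu(f)=\mu(p_\mu f)$ from Section \ref{supports}, this yields $\mu(\mathds{1}_Sf)=\mu(f)$. By linearity this rule extends to the second tensor slot, so $(\nu\otimes\mu)(X)=(\nu\otimes\mu)\bigl((\mathds{1}_G\otimes\mathds{1}_S)X\bigr)$ for every $X\in F(G)\otimes F(G)$.

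Combining the two steps,
\begin{align*}
(\nu\star\mu)(\mathds{1}_S) &= (\nu\otimes\mu)(\Delta(\mathds{1}_S)) = (\nu\otimes\mu)\bigl((\mathds{1}_G\otimes\mathds{1}_S)\Delta(\mathds{1}_S)\bigr) \\
 &= (\nu\otimes\mu)(\mathds{1}_S\otimes\mathds{1}_S) = \nu(\mathds{1}_S)\,\mu(\mathds{1}_S)=1.
\end{align*}
The only real obstacle is recognising that the one-sided group-like axiom delivers its mirror image for free via self-adjointness; after that, $\mu$'s hypothesis is what inserts the $\mathds{1}_S$ in the absorption step, while $\nu$'s hypothesis is consumed only at the very end when $(\nu\otimes\mu)(\mathds{1}_S\otimes\mathds{1}_S)$ is unpacked, confirming that both assumptions on the supports are genuinely needed.
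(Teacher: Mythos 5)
Your proof is correct and is essentially the paper's argument run in reverse: the paper hits the group-like identity $\Delta(\mathds{1}_S)(\mathds{1}_G\otimes\mathds{1}_S)=\mathds{1}_S\otimes\mathds{1}_S$ with $\nu\otimes\mu$ and then strips the trailing $\mathds{1}_S$ from the $\mu$-leg using $\mathds{1}_S-p_\mu\in N_\mu$, whereas you take adjoints and insert a leading $\mathds{1}_S$ by the same absorption. One small imprecision: the left-ideal property of $N_\mu$ gives $\mu(fr)=0$ rather than $\mu(rf)=0$; the latter does hold, but because $r$ is a self-adjoint projection with $\mu(r)=0$ (apply Cauchy--Schwarz to $\mu(rr^*)=0$, or use that $\mu$ is hermitian so $\mu(rf)=\overline{\mu(f^*r)}$), which is the same observation the paper makes when it records $\nu(q_\nu f)=0$ in Section \ref{supports}.
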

\begin{proof}
That $\mathds{1}_\mathbb{S}$ is a group-like projection implies that (using Sweedler notation)
\begin{align*}
  \Delta(\mathds{1}_\mathbb{S})(\mathds{1}_\mathbb{G}\otimes \mathds{1}_\mathbb{S}) & =\mathds{1}_\mathbb{S}\otimes \mathds{1}_\mathbb{S} \\
  \Rightarrow \sum \mathds{1}_{\mathbb{S}(1)}\otimes (\mathds{1}_{\mathbb{S}(2)}\mathds{1}_\mathbb{S}) & =\mathds{1}_\mathbb{S}\otimes \mathds{1}_\mathbb{S}.
\end{align*}
Hit both sides with $\nu\otimes \mu$:
$$\sum \nu(\mathds{1}_{\mathbb{S}(1)})\mu(\mathds{1}_{\mathbb{S}(2)}\mathds{1}_\mathbb{S})=\nu(\mathds{1}_\mathbb{S})\mu(\mathds{1}_\mathbb{S})=1,$$
as $\nu,\,\mu$ are supported on $\mathbb{S}$. Note that $r_\mu:=\mathds{1}_\mathbb{S}-p_\mu\in N_\mu$ as $\mu(r_\mu^*r_\mu)=0$. Furthermore, as $N_\mu$ is a left ideal, $\mathds{1}_{\mathbb{S}(2)}r_\mu\in N_\mu$. Now consider
\begin{align*}
  \mu(\mathds{1}_{\mathbb{S}(2)}\mathds{1}_\mathbb{S})=\mu(\mathds{1}_{\mathbb{S}(2)}(p_\mu+r_\mu)) & =\mu(\mathds{1}_{\mathbb{S}(2)}p_\mu)+\mu(\mathds{1}_{\mathbb{S}(2)}r_\mu) \\
   & =\mu(\mathds{1}_{\mathbb{S}(2)}),
\end{align*}
as $\mu(N_\mu)=\{0\}$ and by (\ref{eq1}). This means that
$$\sum \nu(\mathds{1}_{\mathbb{S}(1)})\mu(\mathds{1}_{\mathbb{S}(2)})=1.$$
However this is the same as
$$(\nu\otimes\mu)\Delta(\mathds{1}_{\mathbb{S}})=1\Rightarrow (\nu\star \mu)(\mathds{1}_{\mathbb{S}})=1\qquad\bullet$$
\end{proof}

\begin{theorem}\label{irrcond}
A random walk $\nu$ is irreducible if and only if $\nu$ is not supported on a proper quasi-subgroup.
\end{theorem}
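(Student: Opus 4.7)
The plan is to prove the equivalence in two parts: the forward direction follows almost immediately from Proposition \ref{quassup}, while the reverse direction requires constructing an idempotent state to which $\nu$ must be supported.

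For the forward direction, assume $\nu$ is supported on a proper quasi-subgroup $S$, so that $\nu(\mathds{1}_S)=1$. By Proposition \ref{quassup} applied inductively, $\nu^{\star k}$ is supported on $S$ for every $k\geq 1$. Since $S$ is proper, $q:=\mathds{1}_G-\mathds{1}_S$ is a non-zero projection and $\nu^{\star k}(q)=1-\nu^{\star k}(\mathds{1}_S)=0$ for all $k$, so $\nu$ is reducible.

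For the reverse direction, suppose $\nu$ is reducible, and fix a non-zero $q\in 2^G$ with $\nu^{\star k}(q)=0$ for all $k\in\mathbb{N}$. Form the Cesaro averages $\phi_N:=\frac{1}{N}\sum_{k=1}^N \nu^{\star k}\in M_p(G)$ and, using finite-dimensional compactness of the state space, extract a convergent subsequence $\phi_{N_j}\to \phi_\infty$. The telescoping identity $\nu\star\phi_N-\phi_N=\frac{1}{N}(\nu^{\star(N+1)}-\nu)$ forces $\nu\star\phi_\infty=\phi_\infty$, and a second Cesaro step then gives $\phi_\infty\star\phi_\infty=\phi_\infty$. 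Thus $\phi_\infty$ is idempotent, so by Proposition \ref{4.2} and Proposition \ref{suppdens} there is a group-like projection $\mathds{1}_S$ with $p_{\phi_\infty}=\mathds{1}_S$ and $\phi_\infty(\mathds{1}_S)=1$. To conclude that $\nu$ is supported on $S$, I would apply $\nu\otimes\phi_\infty$ to both sides of the defining group-like identity $\Delta(\mathds{1}_S)(\mathds{1}_G\otimes \mathds{1}_S)=\mathds{1}_S\otimes\mathds{1}_S$. The right-hand side evaluates to $\nu(\mathds{1}_S)\cdot\phi_\infty(\mathds{1}_S)=\nu(\mathds{1}_S)$, while the left-hand side, in Sweedler notation, equals $\sum\nu(\mathds{1}_{S(1)})\phi_\infty(\mathds{1}_{S(2)}\mathds{1}_S)$. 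Since $p_{\phi_\infty}=\mathds{1}_S$, identity (\ref{eq1}) collapses $\phi_\infty(\mathds{1}_{S(2)}\mathds{1}_S)$ to $\phi_\infty(\mathds{1}_{S(2)})$, so the left-hand side is $(\nu\star\phi_\infty)(\mathds{1}_S)=\phi_\infty(\mathds{1}_S)=1$. Hence $\nu(\mathds{1}_S)=1$. Finally, $S$ must be proper: $\phi_\infty(q)=\lim_j\phi_{N_j}(q)=0$, whereas if $\mathds{1}_S=\mathds{1}_G$ then $\phi_\infty=\int_G$ and faithfulness of the Haar state would force $q=0$, a contradiction.

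The principal obstacle I foresee is the step extracting $\nu(\mathds{1}_S)=1$ from $\nu\star\phi_\infty=\phi_\infty$: the argument is morally dual to the proof of Proposition \ref{quassup}, combining the group-like relation with the support-projection absorption $\phi_\infty(x\mathds{1}_S)=\phi_\infty(x)$, and the Sweedler bookkeeping needs to be handled cleanly. Everything else—Cesaro existence, idempotence of the limit, and properness via faithfulness of the Haar state—is routine in the finite-dimensional setting.
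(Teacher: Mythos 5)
Your proof is correct, and while its overall architecture matches the paper's (forward direction via Proposition \ref{quassup}; reverse direction via a Ces\`aro limit which is a non-Haar idempotent state whose group-like projection is its support, by Propositions \ref{4.2} and \ref{suppdens}), the decisive step is handled by a genuinely different mechanism. The paper concludes $p_\nu\leq \mathds{1}_S$ by building the increasing chain (\ref{chain}) of support projections $p_{\nu_1}\leq p_{\nu_2}\leq\cdots\leq p_{\nu_\infty}$, which requires the averages' supports to be ordered and an appeal to the theory of increasing nets of projections on a Hilbert space (and implicitly to the existence of the full limit $\lim_n\nu_n$, cited to Franz--Gohm). You instead extract only a subsequential limit $\phi_\infty$, derive the invariance $\nu\star\phi_\infty=\phi_\infty$ by telescoping, and then run the group-like identity $\Delta(\mathds{1}_S)(\mathds{1}_G\otimes\mathds{1}_S)=\mathds{1}_S\otimes\mathds{1}_S$ against $\nu\otimes\phi_\infty$, using the absorption $\phi_\infty(f\mathds{1}_S)=\phi_\infty(f)$ from (\ref{eq1}) to collapse the left-hand side to $(\nu\star\phi_\infty)(\mathds{1}_S)=1$ and the right-hand side to $\nu(\mathds{1}_S)$. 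This is exactly the dual of the computation in Proposition \ref{quassup} and it checks out; it buys you a self-contained argument that avoids both the monotone-chain bookkeeping and the unproved convergence of the full Ces\`aro sequence, at the cost of having to verify idempotence of $\phi_\infty$ by hand (your second Ces\`aro step, $\phi_N\star\phi_\infty=\phi_\infty$ for all $N$ followed by a limit, does this correctly). One cosmetic point: Proposition \ref{suppdens} is stated for \emph{proper} quasi-subgroups, but its proof never uses properness, so your order of operations --- identify the support first, establish properness afterwards from $\phi_\infty(q)=0$ and faithfulness of $\int_G$ --- is legitimate.
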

\begin{proof}
Suppose that $\nu $ is supported on a proper quasi-subgroup $\mathbb{S}$, so that $p_\nu\leq \mathds{1}_\mathbb{S}<\mathds{1}_\mathbb{G}$. By Proposition \ref{quassup}, for all $k\in\mathbb{N}$,  $\nu^{\star k}$ is supported on $\mathbb{S}$. Consider the projection $q_\mathbb{S}:=\mathds{1}_\mathbb{G}-\mathds{1}_\mathbb{S}>0$. Then for all $k\in\mathbb{N}$
$$\nu^{\star k}(\mathds{1}_\mathbb{S})=1\Rightarrow \nu^{\star k}(\mathds{1}_\mathbb{G}-q_\mathbb{S})=1\Rightarrow \nu(q_\mathbb{S})=0,$$
that is the random walk given by $\nu$ is reducible.

\bigskip

Suppose now that the random walk given by $\nu$ is reducible so that there is a non-zero  $q\in 2^\mathbb{G}$ such that for all $k\in\mathbb{N}$, $\nu^{\star k}(q)=0$. This implies that for all $n\in\mathbb{N}$, $\nu_n(q)=0$, where
$$\nu_n:=\frac{1}{n}\sum_{k=1}^n\nu^{\star k}.$$
Where
$$\nu_\infty:=\lim_{n\rightarrow \infty}\nu_n,$$
$\nu_\infty$ is an idempotent state (this is well known, see e.g. Th. 7.1, \cite{FG}) such that $\nu_\infty(q)=0$. Thus $\nu_\infty$ cannot be the Haar state as the Haar state is faithful.

\bigskip

Where $p_{\nu_n}$ is the support projection of $\nu_n$,
$$\nu_n(p_{\nu_n})=\frac{1}{n}\sum_{k=1}^n\nu^{\star k}(p_{\nu_n})=1\Rightarrow \nu^{\star k}(p_{\nu_n})=1,$$
for each $1\leq k\leq n$. Now consider $\nu_m$ with $m<n$. As $\nu^{\star k}(p_{\nu_n})=1$ for all $1\leq k\leq m<n$, $\nu_m(p_{\nu_n})=1$ and so $p_{\nu_m}\leq p_{\nu_n}$.  Note that $F(\mathbb{G})\cong B(H)$, and so $(p_{\nu_n})_{n\geq 1}$ is an increasing net of projections on some Hilbert space, and therefore (Th. 4.1.2, \cite{Murph}) $p_{\nu_\infty}$ is the projection  onto the closed vector subspace:$$\overline{\lim_{N\rightarrow \infty}\bigcup_{n=1}^{N}p_{\nu_n}(H)}.$$
As  $p_{\nu_n}(H)\subseteq p_{\nu_\infty}(H)$, each $p_{\nu_n}\leq p_{\nu_{\infty}}$ (Th. 2.3.2, \cite{Murph}), and this implies that:
\begin{equation}
p_\nu=p_{\nu_1}\leq p_{\nu_2}\leq p_{\nu_3}\leq \cdots \leq p_{\nu_n}\leq \cdots \leq p_{\nu_\infty}< p_{\int_\mathbb{G}}=\mathds{1}_{\mathbb{G}},\label{chain}\end{equation}
in particular $\nu$ is concentrated on the quasi-subgroup given by $p_{\nu_\infty}$ $\bullet$\end{proof}

\section{Periodicity}
If a random walk is irreducible, the other way it can fail to be ergodic is if periodic behavior occurs. In the classical case, if a random walk given by $\nu\in M_p(G)$ is irreducible, yet fails to be ergodic, one can construct a proper normal subgroup $N\lhd G$, and show that $\operatorname{supp }\nu\subseteq gN$. As the random walk is irreducible, it must be the case that  $G/N\cong \mathbb{Z}_{d}$, where $d:=[G:N]$.

\bigskip

 Fagnola and Pellicer define:
\begin{definition}
  Let $T_\nu$ be the stochastic operator of an irreducible random walk. A partition of unity $\{p_i\}_{i=0}^{d-1}\subset F(\mathbb{G})$ is called $T_\nu$-cyclic if (where the subtraction is understood mod $d$):
  $$T_\nu(p_iF(\mathbb{G})p_i)=p_{i-1}F(\mathbb{G})p_{i-1}.$$
  The stochastic operator, and the associated random walk, is called \emph{periodic} if there exists a $T_\nu$-cyclic partition of unity with $d\geq 2$. The biggest such $d$ is called the \emph{period} of the random walk.
\end{definition}
Proposition 4.1 of Fagnola and Pellicer states that $\{p_i\}_{i=0}^{d-1}$ is $T_\nu$-cyclic if and only if $T_\nu(p_i)=p_{i-1}$. Furthermore, Theorems 3.7 and 4.3 of Fagnola and Pellicer imply:
\begin{proposition}\label{cyclic}
If $\nu$ is an irreducible but periodic random walk, there exists a $T_\nu$-cyclic partition of unity, $\{p_i\}_{i=0}^{d-1}$ such that
$$T_\nu(p_i)=p_{i-1},$$
where the subtraction is understood mod $d$ $\bullet$
\end{proposition}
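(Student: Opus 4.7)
The plan is to observe that, once the Fagnola--Pellicer framework is transplanted to the quantum group setting, the proposition follows immediately. By the definition of periodicity, the assumption that $\nu$ is periodic already supplies a $T_\nu$-cyclic partition of unity $\{p_i\}_{i=0}^{d-1}$ with $d\geq 2$. Hence the only substantive content remaining is to upgrade the corner-algebra identity $T_\nu(p_iF(G)p_i)=p_{i-1}F(G)p_{i-1}$ to the projection identity $T_\nu(p_i)=p_{i-1}$; this is exactly Proposition 4.1 of Fagnola and Pellicer cited in the paragraph immediately above.

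For that upgrade I would argue directly. Applying the cyclic relation to the particular element $p_i\in p_iF(G)p_i$ gives $T_\nu(p_i)\in p_{i-1}F(G)p_{i-1}$, so $T_\nu(p_i)=p_{i-1}T_\nu(p_i)p_{i-1}$, and in particular $T_\nu(p_i)p_j=0$ for $j\neq i-1$ by mutual orthogonality of the $p_j$. Since $T_\nu$ is unital and positive (Proposition \ref{propertiesofP}), summing over $i$ and multiplying on the right by $p_{j-1}$ yields
\begin{equation*}
p_{j-1}=\mathds{1}_G\, p_{j-1}=\left(\sum_{i=0}^{d-1}T_\nu(p_i)\right)p_{j-1}=T_\nu(p_j)p_{j-1}=T_\nu(p_j),
\end{equation*}
which is the desired equality.

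The main obstacle lies not in the calculation above but in invoking Theorems 3.7 and 4.3 of Fagnola and Pellicer in the first place; these are the results that actually produce the cyclic partition from the peripheral spectrum $\sigma(T_\nu)\cap\mathbb{T}\cong C_d$ afforded by irreducibility. Their hypotheses require $T_\nu$ to be a unital Schwarz map, which is covered by complete positivity established in the preceding section, and irreducible in their subharmonic sense, which Theorem \ref{FPJ} has identified with irreducibility of the random walk itself. With this dictionary in hand, the spectral construction of the $p_i$ transfers from \cite{FP} to the present setting, and the short computation above then completes the proof.
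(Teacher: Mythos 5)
Your proposal is correct and follows essentially the same route as the paper, which offers no proof beyond the citation of Fagnola and Pellicer: the partition of unity is supplied by the definition of periodicity, and Proposition 4.1 of \cite{FP} upgrades the corner identity $T_\nu(p_iF(G)p_i)=p_{i-1}F(G)p_{i-1}$ to $T_\nu(p_i)=p_{i-1}$. Your direct verification of that upgrade --- which in fact uses only linearity, unitality and the mutual orthogonality of the $p_i$ --- is a correct detail that the paper leaves entirely to the reference.
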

Clearly each $p_i$ is harmonic for $T_{\nu^{\star d}}$. These $T_\nu$-cyclic partitions of unity behave very much like indicator functions of cosets of normal  subgroups of classical groups, such that $\nu$ is concentrated on the coset, given by $p_1$, of the normal subgroup given by $p_0$.
\begin{proposition}\label{propncyclic}
Suppose that $\{p_i\}_{i=0}^{d-1}$ is a $T_\nu$-cyclic partition of unity. Then the indexing $i=0,1,\dots,d-1$ can be chosen such that
\begin{enumerate}
\item[i.] $$\varepsilon(p_i)=\begin{cases}
                       1, & \mbox{if } i=0, \\
                       0, & \mbox{otherwise}.
                     \end{cases}$$

  \item[ii.]$$\nu(p_i)=\begin{cases}
             1, & \mbox{if } i=1, \\
             0, & \mbox{otherwise}.
           \end{cases}$$
\end{enumerate}
Furthermore $\displaystyle\int_\mathbb{G}p_i=\frac{1}{d}$.
\end{proposition}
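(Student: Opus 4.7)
The plan is to use three ingredients from the earlier material: (a) the counit $\varepsilon$ is a $*$-homomorphism, so $\varepsilon$ sends every projection to an element of $\{0,1\} \subset \mathbb{C}$; (b) Proposition \ref{propertiesofP} iii with $k=1$ gives $\nu = \varepsilon \circ T_\nu$; and (c) Proposition \ref{propertiesofP} vi gives $\int_G \circ\, T_\nu = \int_G$. Together with the cyclic identity $T_\nu(p_i) = p_{i-1}$ from Proposition \ref{cyclic}, these pin down all three assertions with essentially no computation.

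For (i), since $\varepsilon$ is a character and each $p_i$ is a projection, $\varepsilon(p_i) \in \{0,1\}$. Applying $\varepsilon$ to $\sum_{i=0}^{d-1} p_i = \mathds{1}_G$ yields $\sum_{i=0}^{d-1} \varepsilon(p_i) = 1$, so exactly one index $i_0$ satisfies $\varepsilon(p_{i_0}) = 1$. A cyclic relabelling $p'_i := p_{(i+i_0) \bmod d}$ evidently preserves the defining relation $T_\nu(p'_i) = p'_{i-1}$, so we may arrange the distinguished projection to sit at index $0$.

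For (ii), with the indexing so fixed, $\nu(p_i) = \varepsilon(T_\nu(p_i)) = \varepsilon(p_{i-1})$, which is $1$ when $i - 1 \equiv 0 \pmod d$, i.e. $i = 1$, and $0$ otherwise. For the Haar integral, the invariance $\int_G \circ\, T_\nu = \int_G$ gives $\int_G p_{i-1} = \int_G T_\nu(p_i) = \int_G p_i$ for every $i$, so all $\int_G p_i$ are equal; summing and using $\int_G \mathds{1}_G = 1$ yields $\int_G p_i = 1/d$.

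There is no serious obstacle here: this proposition is really a bookkeeping consequence of the Hopf-algebra identities on $\varepsilon$, the cyclic identity from Proposition \ref{cyclic}, and the invariance of the Haar state. The only step requiring a brief verification is that a cyclic shift of the indexing preserves the $T_\nu$-cyclic property, and this is immediate from the modular arithmetic.
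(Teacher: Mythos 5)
Your proof is correct and follows essentially the same route as the paper: the paper establishes $\varepsilon(p_i)\in\{0,1\}$ by decomposing $p_i=\alpha_i\eta\oplus r_i$ against the Haar element, which is just another way of phrasing your observation that $\varepsilon$ is a character, and the remaining steps ($\nu(p_i)=\varepsilon(T_\nu(p_i))=\varepsilon(p_{i-1})$ and Haar-invariance forcing $\int_G p_i=1/d$) are identical. Your explicit check that a cyclic relabelling preserves the $T_\nu$-cyclic property is a small point the paper leaves implicit, but it is not a substantive difference.
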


\begin{proof}
\begin{enumerate}
\item[i.] Where $\eta\in 2^\mathbb{G}$ is the \emph{Haar element}, writing
    \begin{align*}
    p_i&=\alpha_i\eta\oplus r_i,
  \end{align*}
  each $\alpha_i$ is zero or one. Note
  $$\sum_{i=0}^{d-1}p_i=\left(\sum_{i=0}^{d-1}\alpha_i\right)\eta\oplus\left(\sum_{i=0}^{d-1}r_i\right) =\mathds{1}_\mathbb{G}=1\eta\oplus \bigoplus_{i}I_{n_i},$$
  and this implies that only one of the $\alpha_i=1$. Choose it to be $i=0$.
  \item[ii.] From Proposition \ref{propertiesofP} iii., $\nu(p_i)=\varepsilon(p_{i-1})$.
  \end{enumerate}

  By Proposition \ref{propertiesofP} v.,
  $$\int_\mathbb{G}\circ \,T_{\nu^{\star k}}=\int_\mathbb{G}.$$
  Let $i,j\in\{0,1,\dots,d-1\}$:
  \begin{align*}
    \int_\mathbb{G} T_{\nu^{\star (i-j)}}(p_i)  =\int_\mathbb{G} p_i    \Rightarrow \int_\mathbb{G} p_j  =\int_\mathbb{G} p_i=:c,
  \end{align*}
and so for any $j\in\{0,1,\dots,d-1\}$,
  $$dc=d\cdot \int_\mathbb{G} p_j=\sum_{i=0}^{d-1}\int_\mathbb{G} p_i=\int_\mathbb{G}\sum_{i=0}^{d-1}p_i=\int_\mathbb{G} \mathds{1}_\mathbb{G}=1\qquad \bullet$$
\end{proof}
For the remainder of the current work,  this indexing is understood.

\begin{theorem}\label{GLPT}
Suppose that $\{p_i\}_{i=0}^{d-1}$ is a $T_\nu$-cyclic partition of unity for an irreducible random walk. Then $p_0$ is a group-like projection.
\end{theorem}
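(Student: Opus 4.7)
The strategy is to exhibit an idempotent state whose support projection is exactly $p_0$, after which Propositions \ref{4.2} and \ref{suppdens} identify $p_0$ as a group-like projection. The natural candidate is the Ces\`aro limit
$$\phi := \lim_{n\to\infty}\frac{1}{n}\sum_{k=1}^n \nu^{\star(dk)},$$
which exists by compactness of the state space and is idempotent, since $\nu^{\star d}\star\phi = \phi$ forces $\phi\star\phi=\phi$. Because $T_\nu^{d}(p_0)=p_0$, Proposition \ref{propertiesofP}(iii) and Proposition \ref{propncyclic}(i) give $\nu^{\star(dk)}(p_0)=\varepsilon(p_0)=1$ for every $k$, so $\phi(p_0)=1$ and hence $p_\phi\le p_0$. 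What remains is to force equality.

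The key intermediate fact, and the principal obstacle, is that the full Ces\`aro limit $\nu_\infty := \lim \frac{1}{n}\sum_{k=1}^n \nu^{\star k}$ equals the Haar state $\pi$. Being idempotent, its support $p_{\nu_\infty}$ is group-like by Proposition \ref{suppdens}, and the relation $\nu\star\nu_\infty=\nu_\infty$ together with the group-like identity
$$\Delta(p_{\nu_\infty}) = p_{\nu_\infty}\otimes p_{\nu_\infty} + Y,\qquad Y(\mathds{1}_G\otimes p_{\nu_\infty}) = 0,$$
is decisive: applying $\nu\otimes\nu_\infty$ and using that $\nu_\infty$ annihilates any element of the form $a(\mathds{1}_G-p_{\nu_\infty})$ kills the $Y$-contribution and yields $\nu(p_{\nu_\infty})=1$. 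Irreducibility of $\nu$ then forbids $p_{\nu_\infty}$ from being a proper quasi-subgroup (Theorem \ref{irrcond}), so $p_{\nu_\infty}=\mathds{1}_G$ and $\nu_\infty=\pi$.

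To conclude, split the Ces\`aro sum by residue class modulo $d$:
$$\pi = \nu_\infty = \frac{1}{d}\sum_{i=0}^{d-1}\phi_i,\qquad \phi_i := \nu^{\star i}\star\phi,$$
and observe via Proposition \ref{propertiesofP}(iii) that $\phi_i(p_j)=\delta_{i,j}$, so each $\phi_i$ is supported on $p_i$. For any $f\in F(G)$ the $i\neq 0$ summands annihilate $p_0 f$, giving $\pi(p_0 f)=\frac{1}{d}\phi(f)$; combining with $\int_G p_0=\frac{1}{d}$ (Proposition \ref{propncyclic}) and the traciality of the Haar state produces
$$\phi(f) \;=\; d\int_G f p_0 \;=\; \frac{\int_G f p_0}{\int_G p_0}.$$
On the other hand, Proposition \ref{4.2} applied to the idempotent $\phi$ gives $\phi(f)=\int_G f p_\phi/\int_G p_\phi$. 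Comparing the two formulas, faithfulness of the Haar state forces $p_\phi$ to be a scalar multiple of $p_0$, and the projection condition $p_\phi^2=p_\phi$ then forces the scalar to be $0$ or $1$; since $p_\phi\neq 0$ we obtain $p_\phi=p_0$, so $p_0$ is group-like.
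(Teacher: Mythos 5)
Your proposal is correct, but it reaches the conclusion by a genuinely different route from the paper's. The paper also forms the Ces\`aro averages of $\phi=\nu^{\star d}$ and aims to show that the support of the resulting idempotent is exactly $p_0$, but it does so via the quantitative Corollary \ref{CAY}: if $p_{\phi_{k_0}}<p_0$, the non-zero projection $r=p_0-p_{\phi_{k_0}}$ must be charged by some $\nu^{\star k_r}$ with $k_r\leq k_0$, the cyclic structure forces $d\mid k_r$, and this contradicts $\phi_{k_0}(p_{\phi_{k_0}})=1$; a squeezing argument along the chain $p_{\phi_{k_0}}\leq p_{\phi_\infty}\leq p_0$ then finishes. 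You instead first establish that the full Ces\`aro limit $\nu_\infty$ is the Haar state (via $\nu(p_{\nu_\infty})=1$, proved by the same group-like manipulation as in Proposition \ref{quassup}, together with Theorem \ref{irrcond}), then decompose $\pi=\frac{1}{d}\sum_{i}\nu^{\star i}\star\phi$ by residue classes modulo $d$ and read off the explicit formula $\phi(f)=\int_G fp_0/\int_G p_0$, which identifies $p_\phi=p_0$ by Proposition \ref{4.2}, Proposition \ref{suppdens} and faithfulness of the Haar state. Each step checks out: $\phi_i(p_j)=\delta_{ij}$ follows from Proposition \ref{propertiesofP} i.--iii., the vanishing of the $i\neq 0$ terms on $p_0f$ follows from $p_{\phi_i}p_0=p_{\phi_i}p_ip_0=0$ and (\ref{eq1}), and the normalisation uses Proposition \ref{propncyclic}. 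Your route buys an explicit identification of the limiting idempotent as the normalised restriction of the Haar state to $p_0$ --- a clean quantum analogue of Haar measure on $G$ being the average of the uniform measures on the cosets of $N$ --- whereas the paper's route is more self-contained and retains the quantitative bound $k_0$. One small point of hygiene: compactness of the state space only guarantees subsequential limits of the Ces\`aro averages; for genuine convergence of $\nu_\infty$ and $\phi$ you should invoke the mean ergodic theorem in finite dimensions (or Th. 7.1 of Franz--Gohm, as the paper does in the proof of Theorem \ref{irrcond}).
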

\begin{proof}
If $d=1$, then $p_0=\mathds{1}_\mathbb{G}$ is a group-like projection. Therefore assume $d>1$. Using the Pierce decomposition with respect to $p_0$, where $q_0=\mathds{1}_\mathbb{G}-p_0$,
$$F(\mathbb{G})=p_0 F(\mathbb{G}) p_0+p_0F(\mathbb{G})q_0+q_0F(\mathbb{G})p_0+q_0F(\mathbb{G})q_0.$$
 As $\nu$ is irreducible, by Corollary \ref{CAY}, there exists a $k_0\in\mathbb{N}$, such that for all non-zero $q\in 2^\mathbb{G}$, there exists $k_q\leq k_0\in \mathbb{N}$ such that $\nu^{\star k_q}(q)>0$.

 \bigskip

 Let $\phi:=\nu^{\star d}$ so that, via $T_{\phi}=T_\nu^d$, $T_\phi(p_0)=p_0$ and $T_\phi(p_0F(\mathbb{G})p_0)=p_0F(\mathbb{G})p_0$. Define:

$$\phi_n=\frac{1}{n}\sum_{k=1}^n\phi^{\star k}.$$
Consider $\phi^{\star k}(p_0)$ for any $k\in\mathbb{N}$. Note that

\begin{equation}\label{supp}
  \phi^{\star k}(p_0)=\varepsilon (T_{\phi^{\star k}}(p_0))=\varepsilon (T^k_\phi(p_0))=\varepsilon (T^k_{\nu^{\star d}}(p_0))=\varepsilon(T_\nu^{kd}(p_0))=\varepsilon(p_0)=1,
\end{equation}
that is each $\phi^{\star k}$ is supported on $p_0$. This means furthermore that $\phi_{k_0}(p_0)=1$. The corner $p_0F(\mathbb{G})p_0$ is a hereditary $\mathrm{C}^\ast$-subalgebra, such that $p_0\in p_0F(\mathbb{G})p_0$. Suppose that the support $p_{\phi_{k_0}}<p_0$. This implies that $p_{\phi_{k_0}}\in p_0F(\mathbb{G})p_0$ (Sec. 3.2, \cite{Murph}).

\bigskip

Consider the projection $r:=p_0-p_{\phi_{k_0}}\in p_0F(\mathbb{G})p_0$. There exists a $k_r\leq k_0$ such that

$$0<\nu^{\star k_r}(p_0-p_{\phi_{k_0}})\Rightarrow \nu^{\star k_r}(p_{\phi_{k_0}})<\nu^{\star k_r}(p_0).$$

This implies that $\nu^{\star k_r}(p_0)>0\Rightarrow k_r\equiv 0\mod d$, say $k_r=\ell_r\cdot d$ (note $\ell_r\leq k_0$):

\begin{align*}\nu^{\star \ell_r\cdot d}(p_{\phi_{k_0}})&<\nu^{\star \ell_r\cdot d}(p_0)\\ \Rightarrow (\nu^{\star d})^{\star \ell_r}(p_{\phi_{k_0}})&<(\nu^{\star d})^{\star \ell_r}(p_0)\\ \Rightarrow \phi^{\star \ell_r}(p_{\phi_{k_0}})&<\phi^{\star \ell_r}(p_0)\\ \Rightarrow \phi^{\star \ell_r}(p_{\phi_{k_0}})&<1\end{align*}

By assumption $\phi_{k_0}(p_{\phi_{k_0}})=1$. Consider

$$\phi_{k_0}(p_{\phi_{k_0}})=\frac{1}{k_0} \sum_{k=1}^{k_0}\phi^{\star k}(p_{\phi_{k_0}}).$$

For this to equal one every $\phi^{\star k}(p_{\phi_{k_0}})$ must equal one for $k\leq k_0$, but $\phi^{\star \ell_r}(p_{\phi_{k_0}})<1$. Therefore $p_0$ is the support of $\phi_{k_0}$.

\bigskip

Define
$$\phi_\infty=\lim_{n\rightarrow\infty} \phi_n.$$
This is an idempotent state. Consider (\ref{chain}) for $\phi$, but note by (\ref{supp}) that $p_{\phi_\infty}\leq p_0$:
$$p_\phi=p_{\phi_1}\leq \cdots \leq p_{\phi_{k_0}}\leq \cdots \leq p_{\phi_\infty}\leq p_0,$$
however $p_{\phi_{k_0}}=p_0$ which squeezes $p_{0}=p_{\phi_{\infty}}$, so $p_0$ is the support of a group-like projection, and therefore, by Proposition \ref{suppdens}, $p_0$ is a group-like projection $\bullet$
\end{proof}

The possibility remains that $p_0$ might always correspond to a subgroup. The following example shows that this is not the case.
\subsection{Cocommutative Example} Consider the algebra of functions on a dual group $\widehat{G}$. If $H\leq G$ is a subgroup,
$$\chi_H=\frac{1}{|H|}\sum_{h\in H}\delta^h$$
is a group-like projection, and so corresponds to a quasi-subgroup. The quasi-subgroup is a subgroup if and only if $H\lhd G$.

\bigskip

Consider the algebra of functions on the dual group $\widehat{S_3}$, and a state $u\in M_p(\widehat{S_3})$ given by:
$$u(\sigma)=\langle\xi,\rho(\sigma)\xi\rangle,$$
where $\rho$ is the permutation representation $S_3\rightarrow \operatorname{GL}(\mathbb{C}^3)$, $\rho(\sigma)e_i=e_{\sigma(i)}$, and $$\xi=\left(\frac{1}{\sqrt{2}},-\frac{1}{\sqrt{2}},0\right)\in \mathbb{C}^3.$$
Indeed
$$u(\delta^\sigma)=\begin{cases}
    1, & \mbox{if } \sigma=e \\
    -1, & \mbox{if } \sigma=(12) \\
    -\frac12\operatorname{sgn}(\sigma), & \mbox{otherwise}.
  \end{cases}$$
Let $p=\sum_{\sigma\in S_3}\alpha_\sigma\delta^\sigma\in F(\widehat{S_3})$ be a fixed point of $T_u$:
\begin{align*}
T_u(p)=(u\otimes I_{F(\widehat{S_3})})\circ \Delta(p)= \sum_{\sigma\in S_3}\alpha_\sigma u(\delta^\sigma)\delta^\sigma.
\end{align*}
This implies that either $p=0$ or $p=\mathds{1}_{\widehat{S_3}}$. This implies that $T_u$ is irreducible in the sense of Fagnola and Pellicer, and thus irreducible (Th. \ref{prev} and \ref{FPJ}).

\bigskip

Define $p_0:=\chi_{\langle (12)\rangle}$ and $p_1=\mathds{1}_{\widehat{S_3}}-p_0$. Note that $(p_0,p_1)$ is a $T_u$-cyclic partition of unity, but $p_0$ does not correspond to subgroup of $\widehat{S_3}$ because $\langle (12)\rangle$ is not normal in $S_3$.

\begin{definition}
Let $\mathbb{G}$ be a finite quantum group. A state $\nu\in M_p(\mathbb{G})$ is \emph{supported on a cyclic coset of a proper quasi-subgroup} if there exists a pair of projections $p_0,\,p_1\in 2^{\mathbb{G}}$, such that $p_0p_1=0$, $p_0+p_1\leq \mathds{1}_{\mathbb{G}}$, $\nu(p_1)=1$, $p_0$ is a group-like projection, $T_\nu(p_1)=p_0$, and there exists $d>1$ such that $T_\nu^d(p_1)=p_1$.
\end{definition}
Considering the classical Markov chain theory that ergodic is equivalent to irreducible and aperiodic, it is easy to infer in the classical case that irreducible is equivalent to not concentrated on a subgroup, and, independently, aperiodic is equivalent to not concentrated on a coset of a normal subgroup. This however is incorrect: assuming irreducibility it is true that aperiodic is equivalent to not concentrated on a coset of a proper normal subgroup, but  a reducible random walk on a finite group $G$ can be periodic without being concentrated on any coset of a proper normal subgroup of $G$: consider a random walk concentrated on a coset of a subgroup $N$ normal in a subgroup $H$ of $G$ but $N$ is not normal in $G$. What periodic \emph{is} equivalent to is that the driving probability is concentrated on a coset of a proper normal subgroup $N\lhd H$ of a subgroup $H\leq G$. The definition of a cyclic coset of a proper quasi-subgroup is capturing this latter case: classically $p_0,p_1$ are indicator functions $\mathds{1}_N,\,\mathds{1}_{Ng}$ for some $N\lhd H$, $H\leq G$, such that, for some $d>1$, $H/N\cong \mathbb{Z}_d$. However as soon as a random walk is irreducible and periodic, $N\lhd G$, and the $p_0,\,p_1$ are elements of a full partition of $\mathds{1}_G$.

\subsection{Ergodic Theorem}
The main result may now be stated:
\begin{theorem}(The Ergodic Theorem for Random Walks on Finite Quantum Groups)\label{Erg}
A random walk on a finite quantum group $\mathbb{G}$ given by $\nu\in M_p(\mathbb{G})$ is ergodic if and only if the state is not supported on a proper quasi-subgroup, nor on a cyclic coset of a proper quasi-subgroup.
\end{theorem}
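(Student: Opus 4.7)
The plan is to derive the theorem by assembling the machinery already established. The proof splits into the two implications, and since the heavy lifting sits in Theorems \ref{irrcond} and \ref{GLPT}, Proposition \ref{cyclic}, and the spectral results of Fagnola and Pellicer, both directions reduce to collating these results.

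For the sufficient direction --- that each of the two structural conditions blocks ergodicity --- I would handle the two hypotheses separately. If $\nu$ is supported on a proper quasi-subgroup, Theorem \ref{irrcond} says that the walk is reducible, so some non-zero $q \in 2^G$ has $\nu^{\star k}(q) = 0$ for all $k$; faithfulness of the Haar state gives $\pi(q) > 0$, and convergence fails. If instead $\nu$ is supported on a cyclic coset of a proper quasi-subgroup, I would unpack the hypotheses using Proposition \ref{propertiesofP} iii: from $\nu(p_1) = \varepsilon T_\nu(p_1) = \varepsilon(p_0)$ one extracts $\varepsilon(p_0) = 1$, after which $\nu^{\star(kd+1)}(p_1) = \varepsilon T_\nu^{kd+1}(p_1) = \varepsilon(p_0) = 1$ for every $k \geq 0$, using $T_\nu^d(p_1) = p_1$. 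The key sanity check is that $p_1 \neq \mathds{1}_G$: if $p_1 = \mathds{1}_G$, unitality of $T_\nu$ gives $p_0 = T_\nu(p_1) = \mathds{1}_G = p_1$, contradicting $p_0 \neq p_1$. Hence $\pi(p_1) = \int_G p_1 < 1$ by faithfulness of $\int_G$, and $\nu^{\star(kd+1)}(p_1) = 1$ cannot converge to $\pi(p_1)$.

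For the necessary direction, assume $\nu$ is not ergodic and split on irreducibility. If $\nu$ is reducible, Theorem \ref{irrcond} directly produces the proper quasi-subgroup on whose group-like projection $\nu$ is supported. If $\nu$ is irreducible but not ergodic, I would invoke the Fagnola--Pellicer Perron--Frobenius framework summarised in Section \ref{Spectral}: for an irreducible stochastic operator, $1 \in \sigma(T_\nu)$ is simple, so failure of ergodicity forces the existence of other modulus-one eigenvalues; these form $\sigma(T_\nu) \cap \mathbb{T} \cong C_d$ with $d \geq 2$, so the walk is periodic. Proposition \ref{cyclic} then furnishes a $T_\nu$-cyclic partition of unity $\{p_i\}_{i=0}^{d-1}$ with $T_\nu(p_i) = p_{i-1}$ modulo $d$. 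Theorem \ref{GLPT} identifies $p_0$ as a group-like projection, Proposition \ref{propncyclic} fixes the labelling so that $\nu(p_1) = 1$, and iterating the cyclic relation gives $T_\nu^d(p_1) = p_1$. Because $d \geq 2$, the partition has at least two non-zero terms, so $p_0$ and $p_1$ are distinct and $p_0 < \mathds{1}_G$ is proper; thus $\nu$ is supported on a cyclic coset of the proper quasi-subgroup given by $p_0$.

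The main obstacle is not technical so much as a matter of threading the pieces together cleanly: the one step that is not a direct invocation of a labelled result is the transition from \emph{irreducible and non-ergodic} to \emph{periodic}, which uses the noncommutative Perron--Frobenius theorem of Fagnola and Pellicer to ensure that $1$ is a simple eigenvalue of $T_\nu$ and that the peripheral spectrum is cyclic. Everything else then follows essentially mechanically from Theorems \ref{irrcond} and \ref{GLPT}, Propositions \ref{cyclic} and \ref{propncyclic}, and the identity $\varepsilon T_\nu^k = \nu^{\star k}$ of Proposition \ref{propertiesofP} iii, which is the bridge used throughout to convert $T_\nu$-orbit data into information about the convolution powers themselves.
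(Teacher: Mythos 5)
Your proposal is correct and follows essentially the same route as the paper: reducible walks are handled by Theorem \ref{irrcond}, irreducible non-ergodic walks by the Fagnola--Pellicer passage to a $T_\nu$-cyclic partition (Proposition \ref{cyclic}) together with Theorem \ref{GLPT} and the identity $\varepsilon T_\nu^k=\nu^{\star k}$, and the converse direction by the same obstruction computations. Your treatment of the cyclic-coset obstruction is in fact slightly tidier than the paper's, since you deduce $\pi(p_1)<1$ directly from $p_1\neq\mathds{1}_G$ and faithfulness of $\int_G$ rather than invoking Proposition \ref{propncyclic} (whose hypothesis is a full cyclic partition of unity, not merely the pair of projections appearing in the definition of a cyclic coset), but this is a refinement of detail rather than a different argument.
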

\begin{proof}
Assume that the support of $\nu$, $p_\nu\leq \mathds{1}_\mathbb{S}< \mathds{1}_\mathbb{G}$ for a proper quasi-subgroup $\mathbb{S}\subset \mathbb{G}$. By Proposition \ref{quassup}, $p_{\nu^{\star k}}\leq p_\mathbb{S}$ for all $k\in\mathbb{N}$, and thus for $q_\mathbb{S}:=\mathds{1}_\mathbb{G}-\mathds{1}_\mathbb{S}>0$, $\nu^{\star k}(q_\mathbb{S})=0$ for $k\in\mathbb{N}$ and so $\nu$ is not ergodic. Assume now that the support of $\nu$ is concentrated on a cyclic coset of a proper quasi-subgroup. Similar arguments to those used in Proposition \ref{propncyclic} ii. show that $\int_{\mathbb{G}}p_1=\int_{\mathbb{G}}p_0$. Note that $p_0+p_1\leq \mathds{1}_{\mathbb{G}}$ so that
$$\int_{\mathbb{G}}(p_0+p_1)=2\int_{\mathbb{G}}p_0\leq 1\Rightarrow \int_{\mathbb{G}}p_0\leq \frac{1}{2}.$$
Choose $d>1$ such that $T_\nu^d(p_1)=p_1$.
 If $\nu$ were ergodic,
$$\lim_{k\rightarrow \infty}\nu^{\star (dk+1)}=\int_\mathbb{G}\Rightarrow \lim_{k\rightarrow \infty}\nu^{\star(dk+1)}(p_1)=\int_\mathbb{G} p_0\leq \frac{1}{2}.$$
However for all $k\in\mathbb{N}$:
$$\nu^{\star(dk+1)}(p_1)=\varepsilon(T_\nu^{dk+1}(p_1))=\varepsilon(T_\nu^{dk}(p_0))=1,$$
is constant not equal to $\int_\mathbb{G} p_0$, and so $\nu$ is not ergodic.

\bigskip

Assume now that $\nu$ is not ergodic. If $\nu$ is reducible, by Proposition \ref{irrcond}, $\nu$ is concentrated on the proper quasi-subgroup given by $p_{\nu_\infty}$. Assume therefore that $\nu$ is irreducible: by Theorem \ref{prev}, $1\in\sigma(T_\nu)$ is multiplicity free, and so by Proposition \ref{mult}, $\sigma(T_\nu)\cap \mathbb{T}$ consists  of all $d$th roots of unity.  If $\nu$ is not periodic, then $d=1$, $\sigma(T_\nu)\cap \mathbb{T}=\{1\}$, so that $T_\nu^k$ converges, and so $\varepsilon T_\nu^k= \nu^{\star k}$ converges to an idempotent state $\phi$. This idempotent is necessarily such that $\nu\star\phi=\phi=\phi\star \nu$, and so by Proposition \ref{propertiesofP} i., $\phi T_\nu=\nu\star\phi=\phi$. However the multiplicity of $1\in\sigma(T_\nu)$ is one, and so $\nu^{\star k}\rightarrow \phi=\int_{\mathbb{G}}$, that is irreducible and not periodic implies ergodic.  Therefore assume that $\nu$ is irreducible but periodic. Proposition \ref{cyclic} provides a $T_\nu$-cyclic partition of unity $\{p_{i}\}_{i=0}^{d-1}$ such that $d>1$, so that $p_0p_1=0$, $p_0+p_1\leq \mathds{1}_G$, and $T_\nu(p_1)=p_0$. Note that
$$\nu(p_1)=\varepsilon(T_\nu(p_1))=\varepsilon(p_0)=1,$$
so that the support of $\nu$, $p_\nu\leq p_1$. By Theorem \ref{GLPT}, $p_0$ is a group-like projection. Finally $T_\nu^d(p_1)=p_1$ so that $\nu$ is supported on a cyclic coset of a proper quasi-subgroup $\bullet$
\end{proof}
As an easy corollary, a finite version of a result of Franz and Skalski:
\begin{corollary}(Prop. 2.4, \cite{ergodic})
A random walk on a finite quantum group given by a faithful $\nu\in M_p(\mathbb{G})$ is ergodic.
\end{corollary}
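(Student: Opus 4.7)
The plan is to deduce this as an immediate consequence of the Ergodic Theorem \ref{Erg}, using the following observation: a state $\nu \in M_p(G)$ is faithful if and only if its support projection satisfies $p_\nu = \mathds{1}_G$. Indeed, faithfulness means $\nu(f^*f) = 0$ implies $f = 0$, i.e., $N_\nu = \{0\}$, so the projection $q_\nu$ with $N_\nu = F(G)q_\nu$ is zero, and $p_\nu = \mathds{1}_G - q_\nu = \mathds{1}_G$. Given this equivalence, by Theorem \ref{Erg} it suffices to rule out both obstructions to ergodicity.

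For the first obstruction, suppose for contradiction that $\nu$ is supported on a proper quasi-subgroup $S$. By definition this means $\nu(\mathds{1}_S) = 1$, which forces $p_\nu \leq \mathds{1}_S < \mathds{1}_G$. But faithfulness gives $p_\nu = \mathds{1}_G$, contradiction.

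For the second obstruction, suppose $\nu$ is supported on a cyclic coset of a proper quasi-subgroup, so that there exist distinct projections $p_0 \neq p_1$ (part of a $T_\nu$-cyclic partition of unity $\{p_i\}_{i=0}^{d-1}$ with $d \geq 2$ by Proposition \ref{propncyclic}) with $\nu(p_1) = 1$. Then $p_\nu \leq p_1$, but since the $p_i$ form a partition of unity with $d \geq 2$, we have $p_1 \leq \mathds{1}_G - p_0 < \mathds{1}_G$, again contradicting $p_\nu = \mathds{1}_G$.

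There is no real obstacle here: the corollary is essentially a restatement of Theorem \ref{Erg} in the special case $p_\nu = \mathds{1}_G$, since both forms of concentration (on a quasi-subgroup or on a cyclic coset thereof) manifestly require the support projection to be strictly smaller than $\mathds{1}_G$. The only minor point worth emphasising in the write-up is the elementary but important equivalence between faithfulness of $\nu$ and $p_\nu = \mathds{1}_G$, which is the bridge between the standard definition of faithful state and the support-projection language employed throughout the paper.
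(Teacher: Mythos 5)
Your proposal is correct and takes exactly the same route as the paper, which simply notes that faithfulness forces the support projection $p_\nu=\mathds{1}_G$ and then appeals to Theorem \ref{Erg}; you have merely spelled out the two obstructions in detail. No issues.
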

\begin{proof}
The support $p_\nu=\mathds{1}_\mathbb{G}$ $\bullet$.
\end{proof}
\subsection{Discussion}
A new proof of the classical theorem follows. The necessary conditions are easy.  Suppose that a random walk on a classical group given by $\nu\in M_p(G)$ is not ergodic. In the classical case, by Theorem \ref{subgroup} v., all quasi-subgroups are subgroups, and $p_\nu\leq \mathds{1}_H$ for $H<G$ a proper subgroup.

\bigskip

Suppose that the random walk is irreducible. Then Theorem 4.3 of Fagnola and Pellicer provides a $T_\nu$-cyclic partition of unity $\{p_i\}_{i=0}^{d-1}$,  and thus a partition $G=\biguplus_{i=0}^{d-1} S_i$, with $p_i=\mathds{1}_{S_i}$. By definition $S_0=N<G$ a proper subgroup, and $\nu(S_1)=1$. Using the random variable picture (\ref{rv}), each $\zeta_i\in S_1$, and thus $\xi_k\in S_1^k$. As the walk is not concentrated on a subgroup,  and thus every $s\in G$ is in some $S_1^k=S_{k}$, where $S_k$ is understood mod $d$.

\bigskip

 Define a map $\theta:G\rightarrow \mathbb{Z}_d$ by $\{S_i\}\rightarrow \{i\}\subset \mathbb{Z}_d$. Elements $s_i\in S_i=S_1^i$ and $s_j\in S_j=S_1^j$ satisfy $s_is_j\in S_1^iS_1^j=S_1^{i+j}=S_{i+j}$, and thus $\theta$ is a homomorphism, and its kernel, $S_0=N$, is a proper normal subgroup $N\lhd G$, and so $\theta^{-1}(1)=S_1$ is a coset of a proper normal subgroup.

\bigskip

Immediately an issue in the quantum case is that $p_0=\mathds{1}_\mathbb{S}$ is only a quasi-subgroup $\mathbb{S}\subset \mathbb{G}$, and not a subgroup. It would appear fruitful to consider `cosets of quasi-subgroups', perhaps using the notion of shifts of group-like projections, \cite{Kasp2}.  However, even if, as  could be conjectured, that for a $T_\nu$-cyclic partition of unity $\{p_i\}_{i=0}^{d-1}$:
$$\Delta(p_i)=\sum_{j=0}^{d-1}p_{i-j}\otimes p_j,$$
and some class of quotient of $\mathbb{G}$ by the quasi-subgroup $\mathbb{S}$  be constructed, such that $\text{``}F(\mathbb{G}/\mathbb{S})\text{''}\cong F(\mathbb{Z}_d)$, the algebra of functions on the cyclic group $\mathbb{Z}_d$; or perhaps  some class of morphism $p_i\mapsto \delta_i\in F(\mathbb{Z}_d)$ be constructed, and the notion of a `normal quasi-subgroup' developed,  the contents of Section \ref{PureStates} suggest that this doesn't go anywhere useful.

\subsubsection{Pure States}\label{PureStates}

The question now arises: what is the quantum generalisation of a probability concentrated on a coset of a proper normal subgroup? An obvious generalisation of a classical $\delta_{gH}\in F(G/H)$ would be a minimal projection $p\in 2^{\mathbb{G}/\mathbb{H}}$, for a normal quantum subgroup $\mathbb{H}\lhd \mathbb{G}$ (see \cite{SZW} for definition of a \emph{normal} quantum subgroup $\mathbb{H}\lhd \mathbb{G}$; given via a quantum analogue of functions constant on cosets of $H\lhd G$).  That $\nu\in M_p(\mathbb{G})$ is concentrated on a coset  translates to $p_\nu\leq \imath(p)$, where $\imath: F(\mathbb{G}/\mathbb{H})\rightarrow F(\mathbb{G})$ is the inclusion. Minimal projections $q\in 2^\mathbb{G}$ give rise to pure states $\mathcal{F}(q/\int_\mathbb{G}q)$.

 \bigskip

The trivial subgroup $\mathbb{C}\cong F(\{e\})$ given by the surjective unital $*$-homomorphism $\varepsilon:F(\mathbb{G})\rightarrow F(\{e\})$ is a normal subgroup, and, as for all $f\in F(\mathbb{G})$:
$$(I_{F(\mathbb{G})}\otimes \varepsilon)\circ \Delta(f)=f\cong f\otimes \mathds{1}_{\{e\}},$$
\emph{all} elements of $F(\mathbb{G})$ are constant on cosets of $\{e\}\lhd \mathbb{G}$, and therefore $F(\mathbb{G})\cong F(\mathbb{G}/\{e\})$. Freely identifying these algebras, take a pure state $\delta\in M_p(\mathbb{G})$ and its associated minimal projection, which is necessarily its support $p_\delta$, and also a minimal projection in $2^{\mathbb{G}/\{e\}}$, and so `concentrated on a coset of a proper normal subgroup'.  If the random walk on $\mathbb{G}$ given by a pure state $\delta$ were ergodic, then this would be a counterexample to the claim that $\nu\in M_p(\mathbb{G})$ being supported on a coset of a proper normal subgroup is a barrier to ergodicity.

 \bigskip

 Consider the algebra of functions on $\widehat{S_3}$. Let $\rho:S_3\rightarrow \operatorname{GL}(\mathbb{C}^2)$ be the irreducible unitary two-dimensional representation, and $\xi=(1,\sqrt{2})/\sqrt{3}\in\mathbb{C}^2$ a unit vector. This data defines a state $u\in M_p(\widehat{S_3})$:
 $$u(\sigma)=\langle\xi,\rho(\sigma)\xi\rangle.$$
 Explicit calculations show that:
$$u=\delta_e+\frac{2\sqrt{2}}{3}\delta_{(12)}-\frac{\sqrt{2}}{3}\delta_{(23)}-\frac{\sqrt{2}}{3}\delta_{(13)}+\frac{e^{-5\pi/6}}{\sqrt{3}}\delta_{(123)}+\frac{e^{5\pi/6}}{\sqrt{3}}\delta_{(132)}.$$
 Note that as $u^{\star k}=u^k$, $u^{\star k}\rightarrow \delta_e=\int_{\widehat{S_3}}$, in other words the random walk given by $u$ is ergodic. However, as $\rho$ is irreducible, $u$ is a pure state. Therefore the classical condition for ergodicity, that $\nu$ not be concentrated on a coset of a proper normal subgroup, is not in general a barrier for ergodicity for random walks on quantum groups.
\section{Partial Results}
\subsection{Pure States on Kac--Paljutkin and Sekine Quantum Groups}\label{KACP}

 Recall that the algebra of functions on a finite quantum group has algebra:
$$F(\mathbb{G})\cong \bigoplus_{i=1}^N M_{n_i}(\mathbb{C}).$$
At least one of the factors must be one-dimensional to account for the counit, and to gather the one dimensional factors, reorder the index $j\mapsto i$ so that $n_i=1$ for $i=1,\dots,m_1$, and $n_i>1$ for $i>m_1$:
$$F(\mathbb{G})\cong \left(\bigoplus_{i=1}^{m_1} \mathbb{C}e_{i}\right)\oplus \bigoplus_{i=m_1+1}^N M_{n_i}(\mathbb{C})=:A_1\oplus B,$$
The pure states of $F(\mathbb{G})$ arise as pure states on single factors. If $\mathbb{G}$ is the Kac--Paljutkin quantum group $\mathfrak{G}_0$, $B=M_2(\mathbb{C})$; and if $\mathbb{G}$ is a Sekine quantum group $Y_n$, $B=M_n(\mathbb{C})$. Note further, in these cases, that
\begin{equation}\Delta(A_1)\subseteq A_1\otimes A_1+B\otimes B\text{ and }\Delta(B)\subseteq A_1\otimes B+B\otimes A_1.\qquad \text{\cites{KP6,Sekine}}\label{relation}\end{equation}
The relations above imply that if $\nu_i:=\mathcal{F}(e^i/\int_\mathbb{G} e_i)$, $p_A=\sum_{i=1}^{m_1}e_i$, and $p_B\in B$ is the identity in that matrix factor, that for all $k\in \mathbb{N}$,
$$p_{\nu_i^{\star k}}\leq p_A\Rightarrow \nu_i^{\star k}(p_B)=0,$$
and so the random walk given by $\nu_i$ is reducible and so not ergodic. The same relations imply that if $\delta \in M_{p}(\mathbb{G})$ is a pure state on $B$, $p_\delta\leq p_B$ that:
$$p_{\delta^{\star 2k}}\leq p_A\text{ and }p_{\delta^{\star (2k+1)}}\leq p_B,$$
and so the random walk given by $\delta$ is not ergodic.

\bigskip

Kac and Paljutkin \cite{KP6} show that, where $n_1$ is the number of one-dimensional factors in  $F(\mathbb{G})$, whenever $B$ consists of a single factor $M_{n_1}(\mathbb{C})$, the relations (\ref{relation}) hold, and so the random walk given by a pure state on such a quantum group is never ergodic.

\subsection{Zhang Convergence}\label{Zhang}
The following result is inspired by the classical Markov chain result that a chain with loops is aperiodic (for a random walks on a  classical group this implies $e\in\operatorname{supp }\nu$), and the proof of Zhang of this fact for the case of a Sekine quantum group (\cite{Zhang}, Proposition 4.1).
\begin{theorem}
Let $\nu\in M_p(\mathbb{G})$ be such that $\nu(p_{\varepsilon})=\nu(\eta)>0$. Then the convolution powers $(\nu^{\star k})_{k\geq 1}$ converge.
\end{theorem}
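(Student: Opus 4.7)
The plan is to exploit the counital identity $T_\varepsilon = I_{F(G)}$ by extracting $\varepsilon$ as a convex summand of $\nu$, so that $T_\nu$ becomes a ``lazy'' perturbation of another stochastic operator whose peripheral spectrum collapses to the single point $1$.

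First, I would identify $p_\varepsilon = \eta$: since $\varepsilon$ is a character and $\eta$ generates the one-dimensional factor of $F(G)$, we have $\varepsilon(\eta) = 1$ and $\varepsilon$ vanishes on $(1-\eta)F(G)(1-\eta)$, so $\eta$ is the smallest projection of mass one under $\varepsilon$. Setting $a := \nu(\eta) = \nu(p_\varepsilon) \in (0,1]$, I would define $\mu := (1-a)^{-1}(\nu - a\varepsilon)$ and verify it lies in $M_p(G)$. Centrality of $\eta$ gives, for each $f \in F(G)$, the orthogonal decomposition $f = \varepsilon(f)\eta + (1-\eta)f(1-\eta)$, so for $f \geq 0$ the summand $(1-\eta)f(1-\eta)$ is positive, and then $(1-a)\mu(f) = \nu(f) - a\varepsilon(f) = \nu((1-\eta)f(1-\eta)) \geq 0$. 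Normalisation $\mu(\mathds{1}_G) = 1$ is immediate. If $a = 1$, then $\nu = \varepsilon$ and $\nu^{\star k} = \varepsilon$ for all $k$, so I would assume henceforth $a \in (0,1)$.

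Next, the counital property forces $T_\varepsilon = I_{F(G)}$, so by linearity of $\nu \mapsto T_\nu$,
$$T_\nu = a\,I_{F(G)} + (1-a)\,T_\mu.$$
Since $\|T_\mu\| = 1$ by Proposition \ref{propertiesofP} viii., $\sigma(T_\mu) \subseteq \overline{\mathbb{D}}$, and therefore $\sigma(T_\nu)$ is contained in the closed disc of radius $1-a$ centred at $a$. This disc meets the unit circle only at $\lambda = 1$, so the peripheral spectrum of $T_\nu$ is exactly $\{1\}$.

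Finally, I would conclude with a standard contraction argument: since $T_\nu$ has operator norm $1$ on the finite-dimensional space $F(G)$, any eigenvalue on the unit circle is semi-simple, for otherwise a Jordan block would force polynomial growth of $\|T_\nu^k\|$. Together with $|\lambda| < 1$ for all remaining spectral values, the Jordan decomposition yields $T_\nu^k \to P$, the spectral projection onto the fixed-point subspace of $T_\nu$, whence $\nu^{\star k} = \varepsilon \circ T_\nu^k \to \varepsilon \circ P$ by Proposition \ref{propertiesofP} iii. The main obstacle is really the convex decomposition step: verifying that $\nu - a\varepsilon$ is a positive functional, which rests entirely on $\eta$ being a central projection supporting $\varepsilon$. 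A pleasant feature is that the argument never assumes irreducibility, so the limit need not be the Haar state.
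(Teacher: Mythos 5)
Your proposal is correct and follows essentially the same route as the paper: both decompose $\nu = \nu(\eta)\varepsilon + (1-\nu(\eta))\tilde{\psi}$ as a convex combination with a genuine state $\tilde{\psi}$, observe $T_\nu = \nu(\eta)I_{F(G)} + (1-\nu(\eta))T_{\tilde{\psi}}$, and conclude that $\sigma(T_\nu)$ lies in the disc $B_{1-\nu(\eta)}(\nu(\eta))$, so that $1$ is the only peripheral spectral point and the powers converge. The only cosmetic differences are that you verify positivity of the remainder directly on the functional using centrality of $\eta$ where the paper works with densities, and your closing remark that power-boundedness forces semi-simplicity of the peripheral eigenvalue (so the limit is the spectral projection, not necessarily the Haar state) is in fact a slightly more careful rendering of the paper's appeal to its spectral-analysis discussion.
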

\begin{proof}
Consider the direct sum decomposition:

$$M_p(\mathbb{G})\subset F(\mathbb{G})^*=(\mathbb{C}\eta\oplus \ker \varepsilon)^*=\mathbb{C}\varepsilon \oplus (\ker \varepsilon)^*,$$
so that
$$\nu=\nu(\eta)\varepsilon+\psi,$$
with $\nu(\eta)>0$. Note that $\varepsilon$ is an idempotent state with density $f_\varepsilon=\eta/\int_\mathbb{G}\eta$.

Therefore

$$f_\nu=\frac{\nu(\eta)}{\int_\mathbb{G}\eta}\eta+f_\psi\in \mathbb{C}\eta\oplus \ker \varepsilon.$$

An element in a direct sum is positive if and only if both elements are positive. The Haar element is positive and so $f_\psi\geq 0$. Assume that $f_\psi\neq 0$ (if $f_\psi=0$, then $\psi=0\Rightarrow \nu=\varepsilon\Rightarrow \nu^{\star k}=\varepsilon$ for all $k$ and so trivial convergence). As the density of a state,
$$\int_\mathbb{G}\left(\frac{\nu(\eta)}{\int_\mathbb{G} \eta}\eta+f_\psi\right)=1\Rightarrow \int_\mathbb{G} f_\psi=1-\nu(\eta).$$

Therefore let

$$f_{\tilde{\psi}}:=\frac{f_\psi}{\int_\mathbb{G} f_\psi}=\frac{f_\psi}{1-\nu(\eta)},$$

be the density of $\tilde{\psi}\in M_p(\mathbb{G})$. Now explicitly write

$$\nu=\nu(\eta)\varepsilon+(1-\nu(\eta))\tilde{\psi}.$$

This has stochastic operator

$$T_\nu=\nu(\eta)I_{F(\mathbb{G})}+(1-\nu(\eta))T_{\tilde{\psi}}.$$

Let $\lambda$ be an eigenvalue of $T_\nu$ of eigenvector $a$. This yields

$$\nu(\eta)a+(1-\nu(\eta))T_{\tilde{\psi}}(a)=\lambda a,$$

and thus

$$T_{\tilde{\psi}}a=\frac{\lambda-\nu(\eta)}{1-\nu(\eta)}a.$$

Therefore, as $a$ is also an eigenvector for $T_{\tilde{\psi}}$, and $T_{\tilde{\psi}}$ is a stochastic operator, it follows that
\begin{align*}
  \left|\frac{\lambda-\nu(\eta)}{1-\nu(\eta)}\right| & \leq 1 \\
  \Rightarrow |\lambda-\nu(\eta)| & \leq 1-\nu(\eta).
\end{align*}
This means that the eigenvalues of $T_\nu$ lie in the ball $B_{1-\nu(\eta)}(\nu(\eta))$ and thus the only eigenvalue of magnitude one is $\lambda=1$. By the discussions of Section \ref{Spectral}, this implies that $(T_\nu^k)_{k\geq 1}$ converges and thus so does $(\nu^{\star k})_{k\geq 1}$.
\end{proof}

\subsection{Freslon's Ergodic Theorem for Random Walks on Duals\label{AmaurySec}}
In \cite{Amaury3}, Amaury Freslon proves the ergodic theorem for random walks on the duals of (possibly infinite) discrete groups. Here is the finite version:
\begin{proposition}(\cite{Amaury3}, Proposition 3.1)
  A random walk $u\in M_p(\widehat{G})$ on a finite dual group is ergodic if and only if $u$ does not coincide with a character on a non-trivial subgroup $H<G$.
\end{proposition}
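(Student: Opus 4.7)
The plan is to translate the abstract ergodicity condition into a pointwise condition on the positive definite function $u\in F(G)$, and then to extract a subgroup plus a character from the points where $|u|$ saturates.

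First I would exploit cocommutativity: the comultiplication of $F(\widehat{G})$ sends $\delta^s\mapsto \delta^s\otimes \delta^s$, so that $u^{\star k}(\delta^s)=u(s)^k$, while the Haar state $\int_{\widehat{G}}=\delta^e$ acts as $\delta^s\mapsto \delta_{s,e}$. Hence the random walk $u$ is ergodic if and only if $u(s)^k\to 0$ for every $s\neq e$, and since $|u(s)|\leq u(e)=1$ by positive definiteness, this is in turn equivalent to $|u(s)|<1$ for every $s\neq e$. The forward direction is then immediate: if $u$ coincides on a non-trivial subgroup $H<G$ with a character $\chi\colon H\to\mathbb{T}$, pick any $h\in H\setminus\{e\}$; then $|u(h)|=|\chi(h)|=1$, and so $u$ cannot be ergodic.

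For the converse I would argue as follows. Suppose $u$ is not ergodic and set $H:=\{s\in G:|u(s)|=1\}$, which by the previous paragraph strictly contains $\{e\}$. Realise $u(s)=\langle \rho(s)\xi,\xi\rangle$ through a unitary representation $\rho\colon G\to\operatorname{GL}(K)$ and unit vector $\xi$, as in equation (\ref{posdef}). The equality case of Cauchy--Schwarz gives $\rho(s)\xi=u(s)\xi$ exactly when $s\in H$. From $\rho(st)=\rho(s)\rho(t)$ it then follows that for $s,t\in H$, $\rho(st)\xi=u(s)u(t)\xi$, so $u(st)=u(s)u(t)$ and $|u(st)|=1$; similarly $\rho(s^{-1})\xi=\overline{u(s)}\xi$, so $s^{-1}\in H$. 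Therefore $H$ is a subgroup of $G$, non-trivial by construction, and $u|_H\colon H\to\mathbb{T}$ is a character that agrees with $u$ on $H$.

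The only subtle step is the observation that $|u(s)|=1$ forces $\rho(s)\xi$ to be a unimodular multiple of $\xi$; once that is in hand, both the closure of $H$ under the group operations and the multiplicativity of $u|_H$ fall out of the homomorphism property of $\rho$. Everything else is bookkeeping, translating between the three pictures of $u$: as a state on $F(\widehat{G})$, as a positive definite function on $G$, and as a matrix coefficient of a unitary representation.
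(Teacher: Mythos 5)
Your proof is correct, but it takes a genuinely different route from the paper's. The paper obtains this proposition as a corollary of its main Ergodic Theorem \ref{Erg}: non-ergodicity forces $u$ to be supported either on a proper quasi-subgroup of $\widehat{G}$, i.e.\ a group-like projection $\chi_H$, whence $u(\chi_H)=1$ forces $\left.u\right|_H\equiv 1$ (the trivial character), or on a cyclic coset of one, whence $T_{u^d}(\chi_H)=\chi_H$ forces each $u(h)$ to be a $d$-th root of unity and the same Cauchy--Schwarz equality argument you use shows $\left.u\right|_H$ is a character. You bypass the quasi-subgroup machinery entirely: you read ergodicity off pointwise from cocommutativity ($u^{\star k}=u^k$, so ergodic iff $|u(s)|<1$ for $s\neq e$), define $H$ as the saturation set $\{s:|u(s)|=1\}$, and verify directly from the equality case of Cauchy--Schwarz that $H$ is a subgroup and $\left.u\right|_H$ a character. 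Your argument is essentially Freslon's original one; it is more elementary and self-contained, and it explicitly establishes both implications, whereas the paper only spells out the ``non-ergodic $\Rightarrow$ character'' direction as a recovery from Theorem \ref{Erg}. What the paper's route buys is an illustration of what ``supported on a proper quasi-subgroup'' and ``supported on a cyclic coset'' concretely mean in the cocommutative case. One minor caution: your $H$ may equal $G$; the statement's ``non-trivial subgroup $H<G$'' should be read as $H\neq\{e\}$ (which corresponds to a \emph{proper} quasi-subgroup of $\widehat{G}$ under the duality $H\mapsto\chi_H$), not as $H$ being proper in $G$, so your construction is consistent with the intended reading.
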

The Ergodic Theorem \ref{Erg} allows us to recover Freslon's Ergodic Theorem in the finite case.

\bigskip

Let $u\in M_p(\widehat{G})$, which satisfies $u(\mathds{1}_{\widehat{G}})=1\Rightarrow u(\delta^e)\cong u(e)=1$, and also $|u(s)|\leq 1$.  Suppose that $u$ is concentrated on a proper quasi-subgroup. That $u$ is concentrated on this quasi-subgroup implies
$$u(\chi_H)=\frac{1}{|H|}\sum_{h\in H}u(\delta^h)=1,$$
and this implies that $\left.u\right|_H=1$, and so $u$ coincides on $H$ with the trivial character $H\rightarrow \{1\}$.

\bigskip

Suppose now that $u$ is not concentrated on quasi-subgroup but on a cyclic coset of a quasi-subgroup. Then there exists a quasi-subgroup $p_0=\chi_{H}$ and $d\in \mathbb{N}$ such that $T_u^d(\chi_H)=T_{u^d}(\chi_H)=\chi_H$:
\begin{align*}
  T_{u^d}(\chi_H) & =\left(u^d\otimes I_{F(\widehat{G})}\right)\Delta\left(\frac{1}{|H|}\sum_{h\in H}\delta^h\right) \\
   & =\frac{1}{|H|}\sum_{h\in H}u(h)^d\delta^h=\chi_H,
\end{align*}
so that each $u(h)$ is a $d$-th root of unity. Let $u$ be defined by a unitary representation $\rho_u:G\rightarrow \operatorname{GL}(H)$ and a unit vector $\chi\in H$. For $h\in H$, following Freslon, apply  the Cauchy--Schwarz inequality:
$$|u(h)|=|\langle \rho_u(h)\xi\,\xi\rangle|\leq \|\rho_u(h)\xi\|\|\xi\|=1,$$
see it is an equality and thus $\rho_u(h)\xi=u(h)\xi$. It follows that
\begin{align*}
u(h_1h_2)&=\langle \rho_u(h_1h_2)\xi,\xi\rangle=\langle \rho_u(h_1)\rho_u(h_2)\xi,\xi\rangle=\langle \rho_u(h_1)u(h_2)\xi,\xi\rangle
\\ &=u(h_2)\langle \rho_u(h_1)\xi,\xi\rangle=u(h_1)u(h_2),
\end{align*}
that is $\left.u\right|_H$ is a character.

\subsection{Baraquin's Ergodic Theorem\label{centralstates}}
A tool used in the quantitative analysis of random walks on classical groups is the Upper Bound Lemma of Diaconis and Shahshahani \cite{DS}. This tool was extended for use with compact classical groups by Rosenthal \cite{Ros}, finite quantum groups by the author \cite{McCarthy}, and finally for random walks given by absolutely continuous states on compact quantum groups of Kac type, by Freslon \cite{Amaury}. The upper bound follows an application of the Cauchy--Schwarz inequality to:
\begin{equation}\label{UBL}
\|\nu^{\star k}-\pi\|_2^2=\sum_{\alpha\in\operatorname{Irr}(\mathbb{G})\backslash\{\tau\}} d_\alpha \left[\left(\widehat{\nu}(\alpha)^*\right)^k\widehat{\nu}(\alpha)^k\right].
\end{equation}
The map $\|\cdot\|_2:F(\widehat{\mathbb{G}})\rightarrow \mathbb{R}$ here is related to the $\mathcal{L}^2$-norm, for $\varphi\in F(\widehat{\mathbb{G}})$ by
$$\|\varphi\|_2^2:=\|f_\varphi\|_{\mathcal{L}_2}^2=\int_\mathbb{G}|f_\varphi|^2.$$
Hence the necessity that the state $\nu\in M_p(\mathbb{G})$ be absolutely continuous (i.e. have a density $f_\nu\in\mathcal{L}^1(\mathbb{G})$, automatic in the finite case).

\bigskip

The set $\operatorname{Irr}(\mathbb{G})$ is an index set for a family of pairwise-inequivalent irreducible unitary representations of the compact quantum group $\mathbb{G}$ (the representations are given by corepresentations $\kappa_\alpha:V_\alpha\rightarrow V_\alpha\otimes F(\mathbb{G})$). The index $\tau$ is for the trivial representation. The dimension $d_\alpha\in\mathbb{N}$ is the dimension of the vector space $V_\alpha$, while the linear map $\widehat{\nu}(\alpha)\in L(\overline{V})$, the Fourier transform of $\nu$ at the representation $\kappa_\alpha$, is given by:
$$\widehat{\nu}(\alpha)=(I_{\overline{V_\alpha}}\otimes\nu)\circ \overline{\kappa_\alpha}.$$
Here $\overline{\kappa_\alpha}$ is the representation conjugate to $\kappa_{\alpha}$.

\bigskip

 However, for finite quantum groups, of course, all norms are equivalent. Thus (\ref{UBL}) can be used qualitatively, to detect if the random walk given by $\nu$ is ergodic, and there are a class of states whose ergodicity can be determined quite easily via the upper bound lemma.

\bigskip

Following Freslon \cite{Amaury}, consider the \emph{central algebra} of a finite quantum group, $F(\mathbb{G})_0$, the span of the irreducible characters of $\mathbb{G}$. Where $\{\rho^{\alpha}_{ij}:i,j=1,\dots,d_\alpha\}$ are the matrix coefficients of an irreducible representation $\kappa_\alpha$, the character of $\kappa_\alpha$ is given by:
$$\chi_\alpha:=\sum_{i=1}^{d_\alpha} \rho_{ii}^\alpha\in F(\mathbb{G}),$$
so that $F(\mathbb{G})_0=\operatorname{span}\{\chi_\alpha:\alpha\in\operatorname{Irr}(\mathbb{G})\}$. Consider a state $\nu\in M_p(\mathbb{G})$ whose density $f_\nu$ is in the central algebra:
$$f_\nu=\sum_{\alpha\in\operatorname{Irr}(\mathbb{G})}f_\alpha \chi_\alpha.$$
For such states, it can be shown that the Fourier transform at a representation indexed by $\alpha$ is scalar:
$$\widehat{\nu}(\alpha)=\frac{f_\alpha}{d_\alpha}\cdot I_{d_\alpha}\Rightarrow \left(\widehat{\nu}(\alpha)^*\right)^k\widehat{\nu}(\alpha)^k=\frac{|f_\alpha|^{2k}}{d_\alpha^{2k}}\cdot I_{d_\alpha},$$
so that, for such a central state:
$$\|\nu^{\star k}-\pi\|_2^2=\sum_{\alpha\in\operatorname{Irr}(\mathbb{G})\backslash \{\tau\}}d_\alpha^2\left|\frac{f_\alpha}{d_\alpha}\right|^{2k}.$$
 When stating it for the case of a Sekine quantum group, Baraquin (\cite{Baraquin}, Proposition 3) all but wrote down the following corollary:
\begin{corollary}(Baraquin's Ergodic Theorem)\label{BET}
If a random walk on a finite quantum group $\mathbb{G}$ given by $\nu\in M_p(\mathbb{G})$ has density $f_\nu=\sum_{\alpha\in \operatorname{Irr}(\mathbb{G})}f_\alpha \chi_\alpha\in F(\mathbb{G})_0$, then it is ergodic if and only if
$$|f_\alpha|<d_\alpha,$$
for all non-trivial irreducible representations $\kappa_\alpha$.
\end{corollary}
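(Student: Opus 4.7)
The plan is to read the corollary off from the displayed formula
$$\|\nu^{\star k}-\pi\|_2^2=\sum_{\alpha\in\operatorname{Irr}(G)\backslash \{\tau\}}d_\alpha^2\left|\frac{f_\alpha}{d_\alpha}\right|^{2k}$$
which is derived in the paragraph immediately preceding the statement. That identity does the heavy lifting: the centrality hypothesis on $f_\nu$ forces $\widehat{\nu}(\alpha)=(f_\alpha/d_\alpha)I_{d_\alpha}$ by a Schur-type argument, and this collapses the upper bound lemma into the closed form above. Since $G$ is finite, $\operatorname{Irr}(G)$ is a finite set, so the right-hand side is a \emph{finite} sum of non-negative terms, and the question of whether it tends to zero as $k\to\infty$ reduces to termwise inspection.

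For sufficiency, assume $|f_\alpha|<d_\alpha$ for every non-trivial $\alpha\in\operatorname{Irr}(G)$. Then each ratio $|f_\alpha/d_\alpha|$ is a number strictly less than $1$, so the summand $d_\alpha^2|f_\alpha/d_\alpha|^{2k}$ decays geometrically in $k$. A finite sum of such terms still tends to $0$, so $\|\nu^{\star k}-\pi\|_2\to 0$. Because $F(G)$ is finite dimensional, all $\mathcal{L}^p$-norms on it are equivalent, which yields $\|f_{\nu^{\star k}}-\mathds{1}_G\|_1\to 0$, and then Lemma \ref{1norm} delivers $\|\nu^{\star k}-\pi\|\to 0$; i.e.\ the walk is ergodic.

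For necessity, suppose $\nu$ is ergodic, so $\|\nu^{\star k}-\pi\|\to 0$. Lemma \ref{1norm} together with equivalence of norms on $F(G)$ then forces the $\mathcal{L}^2$-version to vanish as well: $\|\nu^{\star k}-\pi\|_2\to 0$. Every summand in the displayed identity is non-negative, and the total tends to zero, so each individual term must tend to zero. For a non-trivial $\alpha$ the term $d_\alpha^2|f_\alpha/d_\alpha|^{2k}\to 0$ is possible only when $|f_\alpha/d_\alpha|<1$, i.e.\ $|f_\alpha|<d_\alpha$, which is what was required.

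Once the preparatory formula is in hand, there is essentially no obstacle: the argument is pure termwise analysis of a finite geometric-type sum. The only subtlety worth flagging is the free movement between $\mathcal{L}^2$-convergence (natural for the upper bound lemma) and total variation convergence (built into the definition of ergodicity), but in the finite-dimensional setting this is handled automatically by norm equivalence together with Lemma \ref{1norm}, and requires no extra input.
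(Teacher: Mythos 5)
Your proposal is correct and follows essentially the same route as the paper: the corollary is read off termwise from the displayed closed form of $\|\nu^{\star k}-\pi\|_2^2$ for central states, with equivalence of norms in finite dimensions (together with Lemma \ref{1norm}) bridging $\mathcal{L}^2$-convergence and total variation convergence. The paper treats this as immediate from the preceding computation, so your write-up is simply a more explicit version of the same argument.
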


\subsection*{Acknowledgements.} I would like to thank Uwe Franz; much progress on this problem was achieved during a (very enjoyable) May 2019 visit to Uwe at the Laboratoire de math\'{e}matiques de Besan\c{c}on ($\text{Lm}^{\text{B}}$), France. This trip was financially supported by $\text{Lm}^{\text{B}}$, and also Cork Institute of Technology.

  \begin{bibdiv}
      \begin{biblist}
\bib{Baraquin}{article}{
      author={Baraquin, I.},
      author={},
      author={},
      title={Random Walks on Finite Quantum Groups},
      journal={J. Theoret. Probab.},
      volume={33},
      date={2019},
      number={},
      pages={1715\ndash1736},
     }
 \bib{Bekka}{book}{
      author={Bekka, B.},
      author={de la Harpe, P.},
      author={Valette, A.},
      author={},
      title={Kazhdan’s Property ($T$)},
      publisher={Cambridge Univ. Press},
      series={New Math. Monogr.},
      address={Cambridge},
      volume={},
      date={2008},
      number={11},
     }

     \bib{BLACK}{book}{
      author={Blackadar, B.},
      author={},
      author={},
      author={},
      title={Operator Algebras: Theory of $\mathrm{C}^\ast$-Algebras and von Neumann Algebras},
      journal={},
      publisher={Springer},
      volume={},
      date={2006},
      number={},
     }
     \bib{Borel}{book}{
      author={Borel, E.},
      author={Ch\'{e}ron, A.},
      author={Ville, J.A.},
      author={},
      title={Th\'{e}orie math\'{e}matique du bridge : a la port\'{e}e de tous . Applications de la th\'{e}orie des probabilit\'{e}s aux jeux de hasard . Valeur pratique et philosophie des probabilit\'{e}s},
      journal={},
      volume={},
      date={1940},
      number={},
        }
          \bib{cecc}{book}{
      author={Ceccherini-Silberstein, T.},
      author={Scarabotti, F.},
      author={Tolli, F.},
      author={},
      title={Harmonic Analysis on Finite Groups},
      publisher={Cambridge University Press},
      volume={},
      address={New York},
      date={2008},
      number={},
     }

\bib{DS}{article}{
      author={Diaconis, P.},
      author={Shahshahani, M.},
      author={},
      author={},
      title={Generating a random permutation with random transpositions},
      journal={Z. Wahrsch. Verw. Gebiete},
      volume={57},
      date={1981},
      number={},
      pages={159\ndash179},
     }
     \bib{EVANS}{article}{
      author={Evans, D. E.},
      author={H{\o}egh-Krohn, R.},
      author={},
      author={},
      title={Spectral properties of positive maps on $\mathrm{C}^\ast$-algebras},
      journal={J. London Math. Soc.},
      volume={},
      date={1978},
      number={17},
      pages={345\ndash355},
     }
          \bib{FP}{article}{
      author={Fagnola, F.},
      author={Pellicer, R.},
      author={},
      author={},
      title={Irreducible and periodic positive maps},
      journal={Communications on Stochastic Analysis},
      volume={3},
      date={2009},
      number={3},
      pages={407--418},
     }
     \bib{FG}{article}{
      author={Franz, U.},
      author={Gohm, R.},
      author={},
      author={},
      title={Random Walks on Finite Quantum Groups},
      journal={Quantum Independent Increment Processes II, 1866 of Lecture Notes in Math.},

      publisher={Springer, Berlin, Heidelberg},
      
      number={},
      date={2006},
      pages={1\ndash32},
     }

 \bib{ergodic}{article}{
      author={Franz, U.},
      author={Skalski, A.},
      author={},
      author={},
      title={On ergodic properties of convolution operators associated with compact quantum groups},
      journal={Colloquium Mathematicum},
      volume={113},
      date={2008},
      number={3},
      pages={13\ndash23},
     }
      \bib{idempotent}{article}{
      author={Franz, U.},
      author={Skalski, A.},
      author={},
      author={},
      title={On Idempotent States on Quantum Groups},
      journal={Journal of Algebra},
      volume={322},
      date={2009},
      number={5},
      pages={1774\ndash 1802},
     }

\bib{Amaury}{article}{
      author={Freslon, A.},
      author={},
      author={},
      title={Cut-off phenomenon for random walks on free orthogonal free groups},
      journal={Probab. Theory Related Fields},
      volume={174},
      number={3-4},
      date={2019},
      pages={731\ndash760},
     }

   \bib{Amaury3}{article}{
      author={Freslon, A.},

      author={},
      author={},
      title={Positive definite functions and cut-off for discrete groups},
      journal={Canad. Math. Bull.},
      volume={},
      date={2020},
      doi={10.4153/S0008439520000466},
      pages={1\ndash17},
     }
     \bib{KP6}{article}{
      author={Kac, G. I.},
      author={Paljutkin, V.G.},
      author={},
      author={},
      title={Finite Group Rings},
      journal={Trudy Moskov. Mat. Ob\v{s}\v{c}},
      volume={15},
      date={1966},
      translation={
journal={Trans. Moscow Math. Soc.},
date={1967},
pages={251--284}
     }}
     \bib{Kasp2}{article}{
      author={Kasprzak, P.},
      author={},
      author={},
      author={},
      title={Shifts of group-like projections and contractive idempotent functionals for locally compact quantum groups},
      journal={Internat. J. Math.},
      volume={29},
      date={2018},
      number={13},
     }
     \bib{Kasp}{article}{
      author={Kasprzak, P.},
      author={So{\l}tan, P. M.},
      author={},
      author={},
      title={The Lattice of Idempotent States on a Locally Compact Quantum Group},
      journal={Publ. Res. Inst. Math. Sci.},
      volume={56},
      date={2020},
      number={},
      pages={33\ndash53},
     }
      \bib{Kaw}{article}{
      author={Kawada, Y.},
      author={It\^{o}, K.},
      author={},
      author={},
      title={On the probability distribution on a compact group I},
      journal={Proc. Phys.-Math. Soc. Japan},
      volume={3},
      date={1940},
      number={22},
      pages={977\ndash988},
     }
 \bib{Land}{article}{
      author={Landstand, M. B.},
      author={Van Daele, A.},
      author={},
      author={},
      title={Compact and discrete subgroups of algebraic quantum groups, I},
      eprint={arXiv:0702.458},
      volume={},
      date={2007},
      number={},
     }
      \bib{Markov}{article}{
      author={Markov, A.A.},
      author={},
      author={},
      author={},
      title={Extension of the Law of Large Numbers to Dependent Events},
      journal={Bulletin of the Society of the Physics Mathematics},
      address={Kazan, Russia},
      language={In Russian},
      volume={2},
      date={1906},
      number={},
      pages={155\ndash156},
     }
      \bib{McCarthy}{article}{
      author={McCarthy, J.P.},
      author={},
      author={},
      author={},
      title={Diaconis--Shahshahani Upper Bound Lemma for Finite Quantum Groups},
      journal={Fourier Anal Appl},
      volume={25},
      date={2019},
      number={},
      pages={2463\ndash 2491},
     }
      \bib{Murph}{book}{
      author={Murphy, G. J.},
      author={},
      author={},
      author={},
      title={$\mathrm{C}^\ast$-algebras and Operator Theory},
      publisher={Academic Press},
      address={Boston},
      date={1990},
      number={},
      }
    \bib{Pal}{article}{
      author={Pal, A.},
      author={},
      author={},
      author={},
      title={A counterexample on idempotent states on a compact quantum group},
      journal={Lett. Math. Phys.},
      volume={37},
      date={1996},
      number={1},
      pages={75\ndash77},
     }

\bib{Ros}{article}{
      author={Rosenthal, J.S.},
      author={},
      author={},
      author={},
      title={Random Rotation: Characters and Random Walks on $SO(N)$},
      journal={Ann. Probab.},
      volume={22},
      date={1997},
      number={1},
      pages={398\ndash423},
     }
\bib{Sekine}{article}{
      author={Sekine, Y.},
      author={},
      author={},
      author={},
      title={An example of finite-dimensional Kac algebras of Kac--Paljutkin type},
      journal={Proc. Amer. Math. Soc.},
      volume={124},
      date={1996},
      number={4},
      pages={1139\ndash1147},
     }

     \bib{VD2}{article}{
      author={Van Daele, A.},
      author={},
      author={},
      author={},
      title={The Haar Measure on Finite Quantum Groups},
      journal={Proc. Amer. Math. Soc.},
      volume={125},
      date={1997},
      number={2},
      pages={3489\ndash3500},
     }

     \bib{SZW}{article}{
      author={Wang, S. Z.},
      author={},
      author={},
      author={},
      title={Simple Compact Quantum Groups I},
      journal={J. Funct. Anal.},
      volume={10},
      date={2009},
      number={},
      pages={3313\ndash 3341 },
     }

     \bib{Simeng}{article}{
      author={Wang, S.},
      author={},
      author={},
      author={},
      title={$L_p$-improving convolution operators on finite quantum groups},
      journal={Indiana Univ. Math. J.},
      volume={65},
      date={2016},
      number={},
      pages={1609\ndash1637},
     }
     \bib{Zhang}{article}{
      author={Zhang, H.},
      author={},
      author={},
      author={},
      title={Idempotent states on Sekine quantum groups},
      journal={Comm. Algebra},
      volume={47:10},
      date={2019},
      number={},
      pages={4095\ndash4113},
     }

      \end{biblist}
      \end{bibdiv}
\end{document}